\newtheorem{define}{Definition}[section]
\newtheorem{pro}{Proposition}[section]
\newtheorem{cor}{Corollary}[section]
\newtheorem{remark}{Remark}[section]
\theoremstyle{plain} \newtheorem{thm}{Theorem}[section]
\DeclareMathOperator{\IT}{Int\,}
\def\R{\mathbb R}
\title{Asymptotic behavior of blowing-up radial solutions for quasilinear elliptic
systems arising in the study of viscous, heat conducting fluids}
\author{Ahmed Bachir, Jacques Giacomoni and Guillaume Warnault}
\date{\today}
\begin{document}
\maketitle
\begin{abstract}
   In this paper, we deal with  the following quasilinear elliptic system involving gradient terms
in the form:
\begin{center}
$\begin{cases}
\Delta_p u= v^m| \nabla u |^\alpha& \text{in}\quad \Omega\\
\Delta_p v= v^\beta| \nabla u |^q & \text{in}\quad \Omega,
\end{cases}$
\end{center}
where $\Omega\subset\mathbb{R}^N(N\geq 2)$ is either equal to $ \mathbb{R}^N $ or equal to  a ball $B_R$ centered at the origin and having radius $R>0$, $1<p<\infty$, $m,q>0$, $\alpha\geq 0$, $0\leq \beta\leq  m$ and $\delta:=(p-1-\alpha)(p-1-\beta)-qm \neq 0$. Our aim is to establish the asymptotics of the blowing-up radial solutions to the above system.  Precisely, we provide the accurate asymptotic behavior at the boundary for such blowing-up radial solutions. For that,we prove a strong maximal principle for the problem of independent interest and study an auxiliary asymptotically autonomous system in $\R^3$.
\end{abstract}
\section{Introduction and main results}		
This paper deals with a class of quasilinear elliptic system of the following type:
\begin{equation}\label{otoo23}
\begin{cases}
\Delta_p u= v^m| \nabla u |^\alpha& \text{in}\quad \Omega\\
\Delta_p v= v^\beta| \nabla u |^q & \text{in}\quad \Omega,
\end{cases}
\end{equation}	
Here the operator $\Delta_p$ represents the standard $p$-Laplace operator, and $\Omega\subset\mathbb{R}^N(N\geq 2)$ is either equal to $ \mathbb{R}^N $ if the solution is global or equal to  a ball $B_R$ if the solution blows up at some $\widehat{R} \geq  R$ with $\widehat{R}<\infty$, and the parameters involved in  $\eqref{otoo23}$ satisfy 
\begin{center}
$1<p<\infty,\quad m,q>0,\quad \alpha\geq 0,\quad 0\leq \beta\leq  m.$
\end{center}
The global homogeneity parameter related to the system $ \eqref{otoo23} $, $ \delta $, verifies additionally
\begin{equation}\label{otoo24}
\delta:=(p-1-\alpha)(p-1-\beta)-mq \neq 0.
\end{equation}
Our goal is to determine the behaviour of radial and positive blowing-up solutions at boundary. In this regard, the present paper can be considered as the subsequent continuation  of $\cite{book17}$, where several results about existence, global behavior and uniqueness are obtained for global solutions to the system \eqref{otoo24} (see also \cite{book36} and \cite{FiVi} for the particular case $p=2$).
The semilinear case $p = 2$, $\alpha= \beta=0$, $m = 1$, $q = 2$ of problem $ \eqref{otoo23} $ was considered in details by Diaz, Lazzo and Schmidt $ \cite{book1} $ as a model arising from the study of dynamics of a viscous, heat-conducting fluid. Considering a unidirectional flow, independent of distance in the flow direction under the Boussinesq approximation, the velocity $u$ and the temperature $\theta$ satisfy the following system
\begin{equation}\label{otoo140}
\begin{cases}
u_t -\Delta u= \theta & \text{in}\quad \Omega \times (0,T),\\
\theta_t -\Delta \theta= | \nabla u |^2 & \text{in}\quad \Omega \times (0,T).
\end{cases}
\end{equation}
The source terms $ \theta $ and $ | \nabla u |^2 $ represent the buoyancy force and viscous heating, respectively.
With the change of variable $v = -\theta$, steady states of $ \eqref{otoo140} $ satisfy the following coupled elliptic equations:
\begin{equation}\label{otoo140b}
\begin{cases}
\Delta u= v & \text{in}\quad \Omega \times (0,T),\\
\Delta v= | \nabla u |^2 & \text{in}\quad \Omega \times (0,T),
\end{cases}
\end{equation}
which corresponds to the semilinear case $p = 2$, $\alpha= \beta=0$, $m = 1$, $q = 2$ of Problem $ \eqref{otoo23} $.\\
Elliptic and parabolic equations with gradient absorption term arise in different models : reaction diffusion equations (see \cite{book24}) in non newtonian fluids models, turbulent flows in porous media (see \cite{book26}). We refer to \cite{book17} for further explanations of physical models.
In \cite{book17} the authors studied nonconstant positive radial solutions of \eqref{otoo23}, that is, solutions $(u,v)$ which satisfy
\begin{itemize}
\item $u,\, v \in C^1(\overline{\Omega})\cap C^2(\overline{\Omega} \setminus \lbrace 0 \rbrace)$ are positive and radially symmetric;
\item $u$ and $ v $ are not constant in any neighbourhood of the origin;
\item $ u $ and $ v $ satisfy \eqref{otoo23}.
\end{itemize}
If $\Omega = \mathbb{R^N}$, solutions of \eqref{otoo23} will be called global solutions.\\ 
Theorem \ref{khaled4} given below is one of main results in  \cite{book17}, that provides a Keller-Osserman type condition (see \cite{Ke} and \cite{Os}) for system \eqref{otoo23} and then classifies all nonconstant positive radial solutions in a ball $B_R$ according to their asymptotic behavior at the boundary.
\begin{thm}\label{khaled4}
Assume that $\Omega = B_R$, $ 1 < p < \infty$, $m, q > 0$, $0 \leq  \alpha< p-1$, $ 0 \leq  \beta \leq  m $ and $\delta\neq 0$. Then,
\begin{enumerate}
\item[(i)]There are no positive radial solutions $(u,v)$with $  u(R^-) = \infty $ and $ v(R^-) < \infty$.
\item[(ii)] All positive radial solutions of $ \eqref{otoo23} $ are bounded {\it i.i.f.} $\delta>0$
\item[(iii)] There are positive radial solutions $(u,v)$ of $\eqref{otoo23}$ with $u(R^-) < \infty$ and $ v(R^-) = \infty$ {\it i.i.f.} $\delta<-mp-(p-1-\beta)$.
\item[(iv)] There are positive radial solutions $(u,v)$ of $\eqref{otoo23}$ with $u(R^-) = v(R^-) = \infty$ {\it i.i.f.}
$\delta \in [-mp-(p-1-\beta),0)$.
\end{enumerate}
\end{thm}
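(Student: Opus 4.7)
The starting point is the radial reduction of \eqref{otoo23}. Writing $u=u(r)$, $v=v(r)$, the system becomes
\[
\bigl(r^{N-1}(u')^{p-1}\bigr)' = r^{N-1} v^m (u')^\alpha,
\qquad
\bigl(r^{N-1}(v')^{p-1}\bigr)' = r^{N-1} v^\beta (u')^q,
\]
on $(0,R)$, with $u'(0)=v'(0)=0$. Integrating from the origin, combined with the non-constancy hypothesis near zero, shows that $u'>0$ and $v'>0$ on $(0,R)$, so that $u$ and $v$ are strictly increasing and each admits a limit in $(0,\infty]$ at $R^{-}$. This cleanly produces the four dichotomies to be classified.

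Claim (i) is the easiest and I would handle it first via a Keller--Osserman-type barrier. If $v\le M$ on $(0,R)$, then the first equation gives
\[
\bigl(r^{N-1}(u')^{p-1}\bigr)' \le M^m\, r^{N-1}(u')^\alpha,
\]
and a short manipulation using $u',u''\ge 0$ reduces this to $u'' \le C (u')^{\alpha-p+2}$ on $(0,R)$. Because $\alpha<p-1$, multiplying by $(u')^{p-2-\alpha}$ and integrating gives $(u')^{p-1-\alpha}(r)\le C' r$, so $u'$ stays bounded and $u$ cannot blow up at $R$: contradiction.

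The other three claims rest on the self-similar ansatz $u(r)\sim A(R-r)^{-a}$, $v(r)\sim B(R-r)^{-b}$ near the putative singular radius, with $a\le 0$ allowed (corresponding to a finite limit for $u$). Substituting into the two equations and matching leading-order exponents produces a $2\times 2$ linear system for $(a,b)$ whose determinant is proportional to $\delta$, together with algebraic compatibility conditions on $A,B$. The sign of $\delta$ and the threshold $-mp-(p-1-\beta)$ then both emerge from inspecting when this linear system has solutions compatible with blow-up: if $\delta>0$ any admissible pair forces $(a,b)\le 0$, so no blow-up is allowed and (ii) follows; if $\delta<0$ one always has $b>0$, while $a=0$ (bounded $u$) is admissible precisely when $v^m(u')^\alpha$ is integrable up to $R$, a condition which, after substitution of the matched asymptotics, reduces exactly to comparing $\delta$ with $-mp-(p-1-\beta)$. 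This separates (iii) from (iv).

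For the existence half of (ii)--(iv), I would set up a shooting argument at the origin with $v(0)=v_0>0$ as parameter (and, say, $u(0)=1$ fixed), establishing via continuous dependence and monotonicity that every profile predicted by the scaling ansatz is attained for some $v_0$. The main obstacle is to promote the formal leading-order balance to a rigorous asymptotic: this demands converting the ODE system, through a logarithmic change of variables $t=-\log(R-r)$ and an appropriate rescaling of $u,v,u',v'$, into an asymptotically autonomous system in three dimensions, identifying its equilibria and their stable manifolds, and showing that the trajectories of our radial solutions converge to the correct equilibrium in each sign regime of $\delta$. Everything delicate, namely the sharpness of the threshold in (iii)--(iv) and the rigidity of the limit profiles, is concentrated in this dynamical-systems step.
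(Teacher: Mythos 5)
This statement is not proved in the present paper at all: Theorem \ref{khaled4} is quoted verbatim from \cite{book17} (Ghergu--Giacomoni--Singh), and the paper uses it as an imported result. So there is no in-paper proof to compare against; your proposal has to be judged on its own merits. Your treatment of part (i) is essentially complete and correct: bounding $v$ by $M$, discarding the nonnegative term $\frac{N-1}{r}(u')^{p-1}$, and integrating $\left((u')^{p-1-\alpha}\right)'\leq C$ does give $u'$ bounded because $p-1-\alpha>0$, so $u$ cannot blow up. The preliminary monotonicity of $u$ and $v$ obtained by integrating $\left(r^{N-1}(u')^{p-1}\right)'\geq 0$ from the origin is also fine and matches \eqref{otoo35}.

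For (ii)--(iv), however, there is a genuine gap. These are \emph{if and only if} statements, and your formal self-similar matching $u\sim A(R-r)^{-a}$, $v\sim B(R-r)^{-b}$ proves neither direction. For the non-existence directions (e.g.\ ``$\delta>0$ implies all solutions bounded'', or ``no solution with $u(R^-)<\infty$, $v(R^-)=\infty$ unless $\delta<-mp-(p-1-\beta)$''), showing that the $2\times 2$ exponent system has no admissible solution only excludes blow-up \emph{at a pure power rate}; it does not exclude blow-up at some other rate, so the step ``no admissible pair, hence no blow-up'' is a non sequitur. What is actually needed is a Keller--Osserman-type integral estimate for the coupled system (integrate the two equations, eliminate one unknown, and derive an integral condition on the nonlinearities whose convergence or divergence is governed exactly by the sign of $\delta$ and by the threshold $-mp-(p-1-\beta)$); this is the content of the proof in \cite{book17} and it is absent from your sketch. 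For the existence directions, the shooting argument is only announced: you must actually show that for suitable data the maximal interval of existence is finite and identify which of $u,v$ blows up there, which again rests on the same integral conditions. Finally, note that the rigorous identification of the exact power-law rates via the logarithmic change of variables and the asymptotically autonomous three-dimensional system is precisely the contribution of the \emph{present} paper (Theorem \ref{theorem1} and Section 3), and it is carried out only under the additional hypotheses $\delta<0$, $\alpha_0>0$; it is not needed to prove Theorem \ref{khaled4} and cannot simply be invoked to supply the missing dichotomy.
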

From the seminal works of \cite{Bi}, Keller-Osserman condition was investigated for a very large class of elliptic equations, systems and inequations. We refer to \cite{GhRa} and   \cite{Ra} for a survey of the corresponding results.The second result in $ \cite{book17} $ concerns the existence of nonconstant global positive radial solutions of $\eqref{otoo23}$, which shows that Theorem $ \eqref{khaled4} $-(ii) is sharp regarding the existence of global solutions:
\begin{thm}\label{khaled5}
Assume that $ \Omega = \mathbb{R}^N$, $p > 1$, $m,\, q > 0$,  $\alpha \geq  0$, $0\leq \beta \leq  m$ and $ \delta \neq 0$. Then, $ \eqref{otoo23} $ admits nonconstant global positive radial solutions if and only if
\begin{equation}\label{otoo31}
0 \leq  \alpha < p-1 \hspace{0.4 cm} and \hspace{0.4 cm} \delta>0.
\end{equation}
\end{thm}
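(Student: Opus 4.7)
The plan is to reduce to the radial ODE, combine an ad hoc blow-up argument for necessity with a supersolution–comparison argument for sufficiency, and to leverage Theorem \ref{khaled4} and the tools announced in the abstract.

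\textbf{Setup.} In the radial variable $r=|x|$, \eqref{otoo23} becomes
\[
(r^{N-1}(u')^{p-1})' = r^{N-1} v^m (u')^\alpha,\qquad (r^{N-1}(v')^{p-1})' = r^{N-1} v^\beta (u')^q,
\]
with $u(0),v(0)>0$ and, by $C^1$ smoothness at the origin, $u'(0)=v'(0)=0$. Since both right-hand sides are nonnegative, $u',v'\ge 0$, so $u,v$ are non-decreasing and bounded below by their initial values. A nonconstant global solution amounts to the maximal existence radius $R_{\max}$ being $+\infty$.

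\textbf{Necessity.} I would rule out $\alpha\ge p-1$ and $\delta<0$ separately. For $\alpha\ge p-1$, using $v(r)\ge v(0)=:b>0$ and setting $F(r):=r^{N-1}(u'(r))^{p-1}$, the first equation yields
\[
F'(r)\ge b^m\, r^{(N-1)(p-1-\alpha)/(p-1)}\, F(r)^{\alpha/(p-1)}.
\]
For $\alpha>p-1$ the super-$(p-1)$ exponent forces finite-$r$ blow-up of $F$ by separation of variables; for $\alpha=p-1$ one first bootstraps through the second equation (whose right-hand side drives $v\to\infty$) and then re-enters the first to reach the same conclusion. For $0\le\alpha<p-1$ but $\delta<0$, I would exploit the one-parameter scaling $(u,v)(\cdot)\mapsto(\mu^{\sigma_1}u(\mu\cdot),\mu^{\sigma_2}v(\mu\cdot))$, whose exponents $\sigma_i$ are determined by the homogeneity relations and satisfy $\mathrm{sgn}(\sigma_i)=\mathrm{sgn}(\delta)$. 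Rescaling a hypothetical global solution and matching against a power-type barrier would yield a solution on a ball that blows up in finite radius by Theorem \ref{khaled4}(iii)--(iv), contradicting globality.

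\textbf{Sufficiency.} Under $0\le\alpha<p-1$ and $\delta>0$, I would fix any $a,b>0$, solve the Cauchy problem by contraction on the integrated form, and let $R_{\max}$ denote the maximal radius. With the scaling exponents now positive, I would seek a supersolution of the form $(U,V)(r)=(a+Ar^{\sigma_1},b+Br^{\sigma_2})$: the condition $\alpha<p-1$ is precisely what makes the gradient term compatible with this power ansatz, and $\delta>0$ makes $\sigma_1,\sigma_2$ positive and finite. A comparison argument on the coupled system, relying on the strong maximum principle announced in the abstract, would give $u\le U$, $v\le V$ on $[0,R_{\max})$, preventing blow-up and forcing $R_{\max}=+\infty$. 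Strict monotonicity on $(0,\infty)$ delivers nontriviality.

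\textbf{Main obstacle.} The crux is the comparison in the sufficiency step: the quasilinear equations are coupled through both $v$ and $|\nabla u|$, so standard scalar comparison does not apply directly. The paper's strong maximum principle for the coupled problem is tailored precisely to this situation, and dually the phase-portrait analysis of the associated three-dimensional asymptotically autonomous system identifies the attracting equilibrium (existing precisely when $\delta>0$ and $\alpha<p-1$) to which trajectories issuing from $r=0$ converge in logarithmic time $t=\ln r$, confirming global existence.
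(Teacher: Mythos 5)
First, a point of order: Theorem \ref{khaled5} is not proved in this paper at all --- it is quoted verbatim from \cite{book17} as background, so there is no internal proof to compare your attempt against. Judged on its own merits, your sketch has genuine gaps in both directions. For necessity with $\alpha>p-1$, the differential inequality $F'\geq b^m r^{-s}F^{\alpha/(p-1)}$ with $s=(N-1)(\alpha-p+1)/(p-1)>0$ does \emph{not} force finite-radius blow-up by separation of variables: integrating gives $\frac{F(r_0)^{1-\theta}}{\theta-1}\geq b^m\int_{r_0}^{r}\rho^{-s}\,d\rho$ with $\theta=\alpha/(p-1)$, and the right-hand side stays bounded whenever $s>1$, so no contradiction arises. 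The actual obstruction when $\alpha\geq p-1$ lies at the origin, not at infinity: since $F(0)=0$ and $v$ is locally bounded, the reverse inequality $F'\leq Cr^{N-1}F^{\alpha/(p-1)}$ forces $F\equiv 0$ near $r=0$ by a Gronwall/Osgood argument (equivalently $h(r)^{p-1-\alpha}\leq Cr$ for $h(r)=\sup_{[0,r]}u'$, impossible for $h>0$ when $\alpha\geq p-1$), so no \emph{nonconstant} solution exists even locally. Your argument attacks the wrong end of the half-line.

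For necessity of $\delta>0$, invoking Theorem \ref{khaled4}(iii)--(iv) is not enough as stated: those items (and the negation of (ii)) only assert the \emph{existence} of some blow-up solution on a ball when $\delta<0$, not that every solution fails to be global. To close this you need precisely the comparison machinery of Proposition \ref{lemn} combined with the scaling \eqref{scal}: when $\delta<0$ one has $\beta_0>0$, so rescaling a blow-up solution produces blow-up solutions with arbitrarily small value at the origin, and comparison then forces any hypothetical global solution to dominate one of them and hence to blow up at finite radius. You gesture at this but do not carry it out. For sufficiency, the power-type supersolution $(a+Ar^{\sigma_1},b+Br^{\sigma_2})$ is plausible but entirely unverified --- the coefficient inequalities encoding $\delta>0$ must be checked, the ansatz must be a supersolution on all of $(0,\infty)$ including near $r=0$ where the powers vanish, and the ``strong maximum principle'' of this paper (Proposition \ref{lemn}) is an initial-value comparison for the specific system \eqref{uniq} requiring ordering of $v(0)$ and positivity/monotonicity hypotheses, not an off-the-shelf sub/supersolution principle for the coupled gradient system. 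As it stands the proposal is a programme, not a proof, and its one fully explicit step (the separation-of-variables blow-up) is incorrect.
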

In $ \cite{book17} $, the asymptotic behavior and the uniqueness of global solutions are also investigated.
As it  is observed in \cite{book17}, by taking advantage of the homogeneity of system \eqref{otoo23}, we have that any nonconstant positive radial solution $(u,v)$ of $ \eqref{otoo23} $ in a ball $B_{R}$ satisfies the following system 
\begin{equation}\label{otoo35}
\begin{cases}
\left( (u')^{p-1} \right)' +\dfrac{N-1}{r}(u')^{p-1}=v^m(u')^{\alpha} & \text{in} \quad (0,R), \\
\vspace{0.2cm}
\left( (v')^{p-1} \right)' +\dfrac{N-1}{r}(v')^{p-1}=v^\beta(u')^{q} & \text{in} \quad (0,R),\\
u'(0)=v'(0)=0,\ u,\,v>0& \text{in} \quad (0,R).
\end{cases}
\end{equation}
and $u'$ and $v'$ are increasing on $(0,R)$.\\
In the present paper, to complement the above results, we focus on the asymptotic behaviour of blowing-up solutions. Thanks to the homogeneity property of the system, \eqref{otoo35} can be reduced to the problem
\begin{equation}\label{otoo36}
\begin{cases}
\left( (u')^{p-1-\alpha}\right)' +\dfrac{\gamma}{r}(u')^{p-1-\alpha}=\dfrac{\gamma}{N-1}v^m
& \text{in} \quad (0,R), \\
\vspace{0.2cm}
\left( (v')^{p-1}\right)' +\dfrac{N-1}{r}(v')^{p-1}=v^\beta(u')^{q} & \text{in} \quad (0,R),\\
u'(0)=v'(0)=0,\ u,\,v>0& \text{in} \quad (0,R),
\end{cases}
\end{equation}
where
\begin{equation}\label{otoo37}
\gamma=\dfrac{(N-1)(p-1-\alpha)}{p-1}.
\end{equation}
Let  $(u,v)$ be a nonconstant positive radially symmetric solution of \eqref{otoo36} in the ball $B_R$, then, the pair 
\begin{equation}\label{scal}
(u_\lambda,v_\lambda)=\left(\lambda^{1-\alpha_0}  u\left( \dfrac{x}{\lambda}\right),\, \lambda^{-\beta_0} v \left( \dfrac{x}{\lambda}\right)\right)
\end{equation} 
where $\alpha_0=\dfrac{1+\beta-p(m+1)}{\delta}$ and $\beta_0=-\dfrac{p(p-1-\alpha)+q}{\delta}$, provides a nonconstant positive radially symmetric solution of \eqref{otoo36} in the ball $B_{\lambda R}$. This guarantees that in any ball of positive radius there is nonconstant positive radially symmetric solution of \eqref{otoo36}. Hence, thanks to this scaling property, $ R $ can be arbitrary, then without loss of generality we fix in the sequel $ R=1 $.\\
\ \\
Considering $ R=1 $, the main result of the present paper yields the asymptotic behavior of the blowing-up solutions of \eqref{otoo23} namely when {\it (iii)} or {\it(iv)} holds.
\begin{thm}\label{theorem1}
Under the conditions of Theorem \ref{khaled5}, assume $\delta<0$ and $\alpha_0>0$.
Then, 
$$u'(r) \sim \dfrac{\lambda}{(1-r)^{\alpha_0}} \mbox{  and  }\ v(r) \sim \dfrac{\mu}{(1-r)^{\beta_0}} \mbox{  as  $r \to 1^-$} $$
where $\lambda$ and $\mu$ are positive constants given in \eqref{eq13}.
\end{thm}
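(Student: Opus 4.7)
My plan is to reduce \eqref{otoo36} to an asymptotically autonomous three-dimensional dynamical system through the rescaling dictated by the target rates, and then to identify the constants $\lambda,\mu$ as components of its unique positive equilibrium. I introduce the logarithmic time $t=-\ln(1-r)\in(0,\infty)$ and the rescaled unknowns
\begin{equation*}
X(t)=(1-r)^{\alpha_0}u'(r),\qquad Y(t)=(1-r)^{\beta_0}v(r),\qquad Z(t)=(1-r)^{\beta_0+1}v'(r).
\end{equation*}
Differentiating in $t$, substituting \eqref{otoo36}, and using the homogeneity identities $m\beta_0-(p-1-\alpha)\alpha_0=1$ and $q\alpha_0-(p-1-\beta)\beta_0=p$ (both algebraic consequences of the definitions of $\alpha_0,\beta_0$), I obtain the $\R^3$ system
\begin{align*}
\dot X &=-\alpha_0 X+\tfrac{1}{p-1}\,X^{\alpha+2-p}Y^m+f_1(t)X,\\
\dot Y &=-\beta_0 Y+Z,\\
\dot Z &=-(\beta_0+1)Z+\tfrac{1}{p-1}\,X^qY^\beta Z^{2-p}+f_2(t)Z,
\end{align*}
where the perturbations $f_i(t)=O(e^{-t})$ come from the radial factors $\gamma/r$ and $(N-1)/r$. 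This is the asymptotically autonomous system in $\R^3$ announced in the abstract.

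The second step is to pin down the limiting equilibrium. Setting $f_i\equiv 0$ and equating the right-hand sides to zero gives $Z^{*}=\beta_0 Y^{*}$, $(X^{*})^{p-1-\alpha}=(Y^{*})^m/[\alpha_0(p-1)]$ and $(X^{*})^q=(p-1)(\beta_0+1)\beta_0^{p-1}(Y^{*})^{p-1-\beta}$. Eliminating $X^{*}$ leaves the scalar equation $(Y^{*})^{-\delta/(p-1-\alpha)}=\mathrm{const}>0$, which is uniquely solvable because $\delta<0$ and $p-1-\alpha>0$ render the exponent positive. This yields a unique positive equilibrium $(X^{*},Y^{*},Z^{*})$, and I set $\lambda:=X^{*}$, $\mu:=Y^{*}$: these are the constants of \eqref{eq13}, and the claimed asymptotics amount exactly to convergence $(X,Y,Z)(t)\to(X^{*},Y^{*},Z^{*})$ as $t\to\infty$.

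The heart of the proof is then this convergence, which I plan to attack in two stages. First, using the monotonicity of $u'$ and $v'$ guaranteed by \eqref{otoo36} together with the Keller--Osserman type blow-up, I would show that $(X,Y,Z)$ is eventually trapped in a compact subset of the open positive octant; this is where the strong maximum principle of the abstract enters, via comparison of $u'$ and $v$ with sub- and super-solution barriers of the power form $C(1-r)^{-\alpha_0}$ and $C(1-r)^{-\beta_0}$, preventing $X,Y,Z$ from either touching zero or escaping to infinity. Second, I would linearise the autonomous limit at $(X^{*},Y^{*},Z^{*})$ and verify by a spectral computation that the $3\times 3$ Jacobian has only eigenvalues with negative real parts; combined with the exponentially small perturbations $f_1,f_2$, the Markus--Thieme theory of asymptotically autonomous systems then delivers convergence, and unravelling the rescaling gives the stated asymptotics.

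The main obstacle is this last step: in $\R^3$ the $\omega$-limit set of a bounded orbit need not be an equilibrium (periodic orbits or heteroclinic cycles are a priori possible), so the compact trapping must be quantitative enough to land inside the basin of attraction of $(X^{*},Y^{*},Z^{*})$. Constructing such tight barriers via the strong maximum principle, and verifying the negative-real-part condition for the Jacobian in terms of the parameters $p,N,m,q,\alpha,\beta$ using only the hypotheses $\delta<0$ and $\alpha_0>0$, are the two principal technical difficulties.
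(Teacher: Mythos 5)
Your reduction to an asymptotically autonomous system in $\R^3$ via $t=-\ln(1-r)$ and the power rescaling is exactly the paper's first move (up to a harmless normalisation: the paper divides out $\lambda,\mu,\nu$ so that the positive equilibrium sits at $(1,1,1)$), and your identification of $\lambda,\mu$ from the equilibrium equations matches \eqref{eq13}. The gap is in your convergence mechanism. You propose to verify that the Jacobian at the positive equilibrium has only eigenvalues with negative real parts and then invoke asymptotic stability plus Markus--Thieme. This verification must fail: the characteristic polynomial of the linearisation $M_1$ at $(1,1,1)$ is $\lambda^3+C_1\lambda^2+C_2\lambda+C_3$ with $C_3=\alpha_0\beta_0\gamma_0\delta<0$, and the homogeneity identities force $C_1+C_2+C_3=-1$, so $\lambda=1$ is always an eigenvalue. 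The target equilibrium is a hyperbolic saddle with a two-dimensional stable manifold, not a sink (your own system is conjugate to the paper's by a power-map diffeomorphism, so it has the same spectrum). Consequently its ``basin of attraction'' is a measure-zero surface, and no open compact trapping region, however tight, can be shown to land inside it; the strategy of barriers plus local stability cannot close.

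The paper's actual route around this is global and structural: the limiting system is cooperative with $\operatorname{div} g<0$, so Hirsch's theorems for three-dimensional cooperative systems combined with the chain-recurrence of the $\omega$-limit set (Mischaikow--Smith--Thieme) force that set to be contained in $\{(0,0,0),(1,1,1)\}$ — heteroclinic connections are excluded because the sink $(0,0,0)$ breaks chain recurrence, and homoclinic loops at the saddle are excluded by transversality. What remains, and what your proposal does not address at all, is ruling out convergence to the origin, which is an asymptotically stable equilibrium of the limit system and hence a genuine competitor. The paper excludes it by a comparison-principle argument: if the trajectory entered the forward-invariant region $\{X<1,Y<1,Z<1\}$ (Proposition \ref{inf1}), then by continuous dependence of the blow-up radius on $v(0)$ (Propositions \ref{lemn}--\ref{lem2} and Remark \ref{stabi}) a nearby solution blowing up at some $R_\varepsilon<1$ would be trapped in the same region, contradicting $\tilde Y\to+\infty$ at $R_\varepsilon$. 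This is where the comparison/uniqueness machinery of Section 2 is genuinely indispensable, not merely for compactness of the orbit as your plan suggests.
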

\begin{remark}
From Theorem \ref{theorem1}, we get the asymptotics of $u$ when $\alpha_0\in [1,\infty[$, that is as $r\to 1^-$, $u(r)\sim \frac{\lambda}{\alpha_0-1}(1-r)^{-\alpha_0+1}$ if $\alpha_0> 1$ and $u(r)\sim -\lambda\ln(1-r)$ if $\alpha_0=1$. $\alpha_0<1$ holds when $\delta<-mp-(p-1-\beta)$ and then one falls in the case of (iii) of Theorem \ref{khaled4} whereas assertion (iv) is verified  as $\alpha_0\geq 1$.
\end{remark}
\noindent To prove theorem \ref{theorem1}, we perform several changes of variables that reduce  \eqref{otoo35} to an asymptotically autonomous dynamical system for which we appeal the rich theory of cooperative systems (see \cite{book30}, \cite{book23}, \cite{book31} for further details on the subject). We also use chain-reccurence properties of asymptotically-autonomous flows (see \cite{book32}) to identify the $\omega$-limit set of the non autonomous dynamical system. We also need properties concerning equation \eqref{otoo35} or \eqref{otoo36} as comparison principle and continuity with respect the initial data. Section 2 is dedicated to prove these tools whereas Section 3  deals the proof of Theorem \ref{theorem1}. We also give at the end of Section 3 some numerical simulations associated to asymptotics proved in Theorem \ref{theorem1}.
\section{Technical results}
More precisely, we study in this section the pairs of positive functions $(U,v)$ which satisfy, for some $R>0$
\begin{equation}\label{uniq}
\left\{\begin{array}{ll}
( U^{p-1-\alpha})' +\dfrac{\gamma}{r}U^{p-1-\alpha}= \dfrac{\gamma}{N-1}v^m & \text{in} \quad (0,R), \\
\left( (v')^{p-1} \right)' +\dfrac{N-1}{r}(v')^{p-1}=v^\beta U^{q} & \text{in} \quad (0,R),\\
v'(0)=U(0)=0 \mbox{ and } v\to +\infty  \mbox{ as } r\to R. &
\end{array}\right.
\end{equation}
We start with a comparison principle satisfied by \eqref{uniq}.
\begin{pro}\label{lemn}
\indent Let $ (U,v) $ and $ (\widehat{U},\widehat{v}) $ be positive
functions 
which satisfy
\begin{equation}\label{otooe1}
\left\{
\begin{array}{ll}
\left( r^{\gamma}U^{p-1-\alpha} \right)'\geq \dfrac{\gamma}{N-1}r^{\gamma}v^m & \text{for any} \quad r\in (0,R), \\
\left( r^{N-1}(v')^{p-1} \right)' \geq r^{N-1}v^\beta U^{q} & \text{for any} \quad r\in (0,R),\\
v(0)=v_0 \mbox{ and } v'(0)=U(0)=0 & \\
U>0,\ v'>0 \mbox{ in } (0,R)&
\end{array}
\right.
\end{equation}
and
\begin{equation}\label{otooe2}
\left\{
\begin{array}{ll}
\left( r^{\gamma}\widehat U^{p-1-\alpha} \right)'\leq \dfrac{\gamma}{N-1}r^{\gamma}\widehat  v^m & \text{for any} \quad r\in (0,\widehat R), \\
\left( r^{N-1}(\widehat v')^{p-1} \right)' \leq r^{N-1}\widehat v^\beta\widehat U^{q} & \text{for any} \quad r\in (0,\widehat R),\\
\widehat v(0)=\widehat v_0 \mbox{ and } \widehat v'(0)=\widehat U(0)=0 & \\
\widehat U>0,\ \widehat v'>0 \mbox{ in } (0,\widehat R)&
\end{array}
\right.
\end{equation}
for some $R$, $\widehat R>0$.\\
If $ \widehat{v}(0)<v(0) $, then $\widehat{v}<v$ and $\widehat U<U$ in $ (0,\min(R, \widehat R)) $.\\
In addition, assuming that 
$\widehat v$ blows up at $\widehat R$, we have 
$R<\widehat R$.
\end{pro}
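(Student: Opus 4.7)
The plan is a continuation argument to establish the pointwise comparison on the common interval, followed by a homogeneity-scaling argument to extract the strict separation of blow-up radii.

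First, I would recast each of the four differential inequalities in integrated form. Using $v'(0)=U(0)=0$ (and the hatted analogues), integration against the canonical weights gives
\[
U^{p-1-\alpha}(r)\geq\frac{\gamma}{(N-1)\,r^{\gamma}}\int_0^{r} s^{\gamma}v^{m}(s)\,ds,\qquad (v')^{p-1}(r)\geq\frac{1}{r^{N-1}}\int_0^{r} s^{N-1}v^{\beta}U^{q}(s)\,ds,
\]
with the reverse $\leq$ inequalities for $\widehat U^{p-1-\alpha}$ and $(\widehat v')^{p-1}$. Since $m,q>0$, $\gamma>0$ and $p-1-\alpha>0$, strict dominance $\widehat v<v$ on $(0,r]$ propagates via the first formula to $\widehat U<U$ on $(0,r]$, and strict dominance of both pairs propagates via the second formula to $\widehat v'<v'$ on the same interval.

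Next, I introduce
\[
r_0=\sup\bigl\{\,r\in(0,\min(R,\widehat R)):\ \widehat v<v\ \text{and}\ \widehat U<U\ \text{on}\ (0,r]\,\bigr\}.
\]
Continuity at the origin, together with $\widehat v(0)<v(0)$ and the first integrated inequality, gives $r_0>0$. Suppose by contradiction that $r_0<\min(R,\widehat R)$. By continuity at $r_0$, equality is then attained in at least one of the two strict inequalities. However, on $(0,r_0]$ both are strict, so the second integrated formula forces $v'>\widehat v'$ there, whence integrating from $0$,
\[
v(r_0)-\widehat v(r_0)>v(0)-\widehat v(0)>0,
\]
and re-injecting this strict separation into the first integrated formula yields $\widehat U(r_0)<U(r_0)$. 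This contradicts the maximality of $r_0$, so $r_0=\min(R,\widehat R)$, establishing the comparison.

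For the final assertion, I argue by contradiction, assuming $R\geq\widehat R$. If $R>\widehat R$, then $v$ is finite at the point $\widehat R\in(0,R)$, yet $v>\widehat v\to+\infty$ as $r\to\widehat R^{-}$, a contradiction. The remaining case $R=\widehat R$ is the main obstacle; to overcome it, I would invoke the homogeneity scaling \eqref{scal}. Setting
\[
\widehat v_{\lambda}(r)=\lambda^{-\beta_{0}}\widehat v(r/\lambda),\qquad \widehat U_{\lambda}(r)=\lambda^{-\alpha_{0}}\widehat U(r/\lambda),
\]
a direct computation based on the two algebraic identities $\alpha_{0}(p-1-\alpha)-\beta_{0}m=-1$ and $\alpha_{0}q-\beta_{0}(p-1-\beta)=p$ (both immediate from the definition of $\delta$) shows that $(\widehat U_{\lambda},\widehat v_{\lambda})$ remains a sub-solution in the sense of \eqref{otooe2} on $(0,\lambda\widehat R)$, with $\widehat v_{\lambda}(0)=\lambda^{-\beta_{0}}\widehat v(0)$ and blow-up at $\lambda\widehat R$. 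In the blow-up regime $\delta<0$ yields $\beta_{0}>0$, so for $\lambda<1$ sufficiently close to $1$ one has $\widehat v_{\lambda}(0)\in(\widehat v(0),v(0))$. Applying the comparison already proved to $(U,v)$ and $(\widehat U_{\lambda},\widehat v_{\lambda})$, and then the easy case $R>\lambda\widehat R$ treated above, we conclude $R\leq\lambda\widehat R<\widehat R$, contradicting $R=\widehat R$.
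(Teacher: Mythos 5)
Your proof is correct and follows essentially the same route as the paper: a continuation (sup-of-comparison-interval) argument propagated through the two integrated inequalities for the first claim, then the homogeneity scaling to upgrade $R\leq\widehat R$ to $R<\widehat R$. The only (immaterial) difference is that you rescale the subsolution $(\widehat U,\widehat v)$ with $\lambda<1$ to shrink its blow-up radius, whereas the paper rescales the supersolution $(U,v)$ with $\lambda>1$ to enlarge its interval of existence; both rely on the same algebraic identities for $\alpha_0,\beta_0$ and on $\beta_0>0$.
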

\begin{proof}
Define $\rho^*=\sup \{\rho>0 \ |\  \widehat{v}(r)<v(r)\mbox{ on } (0,\rho)\}$. Since $ \widehat{v}(0)<v(0) $, $\rho^*>0$ hence from the first equations of \eqref{otooe1}-\eqref{otooe2}, we deduce $U>\widehat U$ on $(0,\rho^*)$. Moreover using  the second equations of \eqref{otooe1}-\eqref{otooe2}, we get $v'>\widehat v'$ on $(0,\rho^*)$. Thus the mapping $v-\widehat v$ is increasing $(0,\rho^*)$ and we deduce $\rho^*=\min (R,\widehat R)$ and $\widehat v<v$ on $(0,\rho^*)$.\\
Now, assume that $\widehat v$ blows up at $\widehat R$. Since $\widehat v<v$ on $(0,\min (R,\widehat R))$, we have $R\leq \widehat R$.\\
Consider the pair $(U_\lambda, {v}_{\lambda})$ where $U_\lambda=\lambda^{-\alpha_0}U(\frac{.}{\lambda})$ and $v_\lambda$ is defined in \eqref{scal}. Thus $ \left( {U}_{\lambda},{v}_{\lambda} \right) $ satisfies \eqref{otooe1} in $(0,\lambda R) $.\\
Choosing $\lambda$ such that $\dfrac{\widehat v(0)}{v(0)}\leq \lambda ^{-\beta_0}<1$ then $ {v}_{\lambda}(0) >\widehat v(0) $ and from the first part, we deduce $v_\lambda>\widehat v$ on $(0,\min(\lambda R,\widehat R)$ and $  \lambda R\leq \widehat R$ . 
\end{proof}
\begin{pro}\label{lem2} The mapping $\Phi:v_0\to R$, where $(0,R)$ is the maximal interval of a solution $(U,v)$ of \eqref{uniq} with $v(0)=v_0$, is continuous on $(0,+\infty)$.
\end{pro}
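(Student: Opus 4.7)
The plan is to exploit the scaling invariance already recorded in \eqref{scal}, which produces an explicit formula for $\Phi$ and hence yields continuity immediately.

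If $(U,v)$ solves \eqref{uniq} on its maximal interval $(0,R)$ with $v(0) = v_0$, then for each $\lambda > 0$ the rescaled pair
\[
U_\lambda(r) = \lambda^{-\alpha_0} U(r/\lambda), \qquad v_\lambda(r) = \lambda^{-\beta_0} v(r/\lambda)
\]
again solves \eqref{uniq} on $(0, \lambda R)$ with $v_\lambda(0) = \lambda^{-\beta_0} v_0$. The two ODEs are invariant under this scaling (this is exactly the computation underlying \eqref{scal}), the boundary conditions $U_\lambda(0) = 0 = v_\lambda'(0)$ are preserved by construction, and $v(r) \to +\infty$ as $r \to R^-$ translates into $v_\lambda(r) \to +\infty$ as $r \to (\lambda R)^-$. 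One therefore obtains the functional equation
\[
\Phi\bigl(\lambda^{-\beta_0} v_0\bigr) = \lambda\, \Phi(v_0), \qquad \lambda, v_0 > 0. \qquad (\star)
\]

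Since $\beta_0 = -(p(p-1-\alpha)+q)/\delta \neq 0$ (both $\delta \neq 0$ and $\alpha < p-1$ hold in our regime), the map $\lambda \mapsto \lambda^{-\beta_0}$ is a bijection of $(0,+\infty)$. Fixing $v_0 = 1$ and setting $\lambda = s^{-1/\beta_0}$ in $(\star)$ gives the closed form
\[
\Phi(s) = \Phi(1)\, s^{-1/\beta_0}, \qquad s \in (0,+\infty),
\]
which is smooth and therefore continuous. Finiteness of $\Phi(1)$ follows from Theorem \ref{khaled4}(iii)--(iv), which supplies at least one blow-up solution in the regime under consideration; $(\star)$ then propagates this finiteness to every $v_0 > 0$.

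No substantial obstacle is anticipated. The only item that requires care is to verify once and for all that the rescaling preserves both equations and the blow-up condition; both verifications are essentially encoded in the derivation of \eqref{scal}, together with the elementary observation that $v_\lambda(r) \to +\infty$ as $r \to (\lambda R)^-$ iff $v(r/\lambda) \to +\infty$ as $r/\lambda \to R^-$. As a side benefit, the closed-form expression exhibits $\Phi$ as strictly decreasing, consistent with the comparison principle of Proposition \ref{lemn}.
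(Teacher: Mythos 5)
Your argument is correct, but it follows a genuinely different route from the paper's. The paper proves sequential continuity directly: given $v_{0,n}\to v_0$, it uses the rescaled pairs with initial data $v_0\pm\varepsilon$ as barriers and invokes the comparison principle of Proposition \ref{lemn} to sandwich $R_n$ between $\lambda_+R$ and $\lambda_-R$, with $\lambda_\pm\to 1$. You instead extract from the scaling invariance the exact homogeneity relation $\Phi(\lambda^{-\beta_0}v_0)=\lambda\Phi(v_0)$ and read off the closed form $\Phi(s)=\Phi(1)s^{-1/\beta_0}$, from which continuity (and strict monotonicity) is immediate. Interestingly, the paper records precisely this formula afterwards, in Remark \ref{form}, but derives it \emph{from} continuity plus the uniqueness corollary; you have inverted the logical order. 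The one point to be explicit about is that your functional equation $(\star)$ identifies $\Phi(\lambda^{-\beta_0}v_0)$ with the blow-up radius of the particular rescaled pair, which requires $\Phi$ to be single-valued, i.e.\ that any two solutions of \eqref{uniq} with the same value of $v(0)$ blow up at the same radius. This is implicit in the statement calling $\Phi$ a mapping, so you are entitled to it, but it is worth noting that the paper's comparison-based argument does not need it (it bounds the radius of \emph{any} solution with nearby data), and in fact delivers it as a byproduct; if you want your proof to be self-contained on this point, the same combination of Proposition \ref{lemn} with the scaling (rescale one solution by $\lambda\to 1^-$ and compare with the other) shows that two solutions sharing $v(0)$ have equal radii. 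What your approach buys is the explicit formula and monotonicity in one stroke; what the paper's buys is robustness, since it never needs to identify a nearby solution with a rescale of the reference one.
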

\begin{proof} 
Let $(v_{0,n})\subset \R$ such that $v_{0,n}\to v_0$ and . We define $(U,v)$ and $R$ (respectively $(U_n,v_n)$ and $R_n$) solutions of \eqref{uniq} with $v(0)=v_{0}$ defined on the maximal interval $(0,R)$ (respectively with $v_n(0)=v_{0,n}$ defined on the maximal interval $(0,R_n)$).\\  
Let $\varepsilon>0$, Then, we define 
$$ \lambda_\pm=\left(1\pm\dfrac{\varepsilon}{v_0}\right)^{-\frac{1}{\beta_0}}$$
and  the pair $(U_{\lambda_{\pm}}, v_{\lambda_{\pm}})=\left(\lambda^{-\alpha_0}_{\pm}U(\frac{.}{\lambda_{\pm}}),\lambda^{-\beta_0}_{\pm}v(\frac{.}{\lambda_{\pm}})\right) $  is solution of \eqref{uniq} on $(0,\lambda_\pm R)$ with $v_{\lambda_{\pm}}(0)=v_0\pm \epsilon$.
For $n$ large enough, $v_{0,n}\in (v_0-\varepsilon,v_0+\varepsilon)$, hence from Lemma \ref{lemn}, we have $\lambda_+R< R_n < \lambda_-R$ and we deduce $R_n\to R$ as $n\to +\infty$.
\end{proof}
\begin{cor}
For any $R>0$, there exist an unique $v_0>0$ and an unique pair of positive functions $(U,v)$ solution to \eqref{uniq} such that $v(0)=v_0$.
\end{cor}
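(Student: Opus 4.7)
The plan is to establish that the map $\Phi : v_0 \mapsto R$ of Proposition~\ref{lem2} is a continuous, strictly decreasing bijection from $(0,+\infty)$ onto itself; for a prescribed $R>0$, its unique preimage then produces the admissible initial value, and a short scaling argument promotes this to uniqueness of the whole pair $(U,v)$.

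Strict monotonicity is the content of the second assertion of Proposition~\ref{lemn}: for $v_{0,1} < v_{0,2}$ with respective blow-up radii $R_1,R_2$, applying the proposition with $(\widehat v, v) = (v_1,v_2)$ yields $R_2 < R_1$, so $\Phi$ is strictly decreasing and injective. Continuity of $\Phi$ is exactly Proposition~\ref{lem2}. Surjectivity onto $(0,+\infty)$ is essentially immediate from the scaling invariance \eqref{scal}: under the standing assumptions, Theorem~\ref{khaled4}(iii)-(iv) guarantees at least one admissible pair $(U^\star,v^\star)$ with some $(v_0^\star, R_0)$, and for any prescribed $R>0$, setting $\lambda=R/R_0$ in \eqref{scal} delivers a solution of \eqref{uniq} with blow-up radius exactly $R$ and initial value $\lambda^{-\beta_0} v_0^\star$ (which is positive since $\delta<0$ forces $\beta_0=-(p(p-1-\alpha)+q)/\delta > 0$). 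So $R$ lies in the range of $\Phi$.

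For the uniqueness of the pair at a fixed $v_0$, suppose $(U_1,v_1)$ and $(U_2,v_2)$ both solve \eqref{uniq} with $v_1(0)=v_2(0)=v_0$ and hence with $R=\Phi(v_0)$. For any $\lambda>1$ the scaled pair $(U_{1,\lambda},v_{1,\lambda})$ solves \eqref{uniq} in the larger ball of radius $\lambda R$ with initial value $\lambda^{-\beta_0} v_0 < v_0$; Proposition~\ref{lemn} applied between $(U_{1,\lambda},v_{1,\lambda})$ and $(U_2,v_2)$ yields the strict comparison $v_{1,\lambda}<v_2$ and $U_{1,\lambda}<U_2$ on $(0,R)$. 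Sending $\lambda\to 1^+$ produces $v_1\le v_2$ and $U_1\le U_2$, and the symmetric argument reverses the inequalities, forcing $(U_1,v_1)\equiv(U_2,v_2)$.

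The argument is therefore almost entirely a synthesis of Propositions~\ref{lemn} and \ref{lem2} together with the scaling invariance \eqref{scal}. The step that I expect to require the most care is the limit $\lambda \to 1^+$ in the uniqueness of the pair, where one must pass strict inequalities to the limit using the continuous dependence of the scaled solution on $\lambda$; Proposition~\ref{lemn} delivers only strict comparisons so this limiting step is the only place where a standard ODE-continuity input is implicitly needed. The surjectivity step depends decisively on the existence result from \cite{book17} recalled in Theorem~\ref{khaled4}, without which there would be no bootstrap solution to scale.
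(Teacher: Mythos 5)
Your argument is correct and follows essentially the same route as the paper: strict monotonicity of $\Phi$ from Proposition \ref{lemn}, continuity from Proposition \ref{lem2}, and hence invertibility of $\Phi$ on $(0,+\infty)$. The paper's own proof is just terser — it does not spell out surjectivity via the scaling \eqref{scal} or the $\lambda\to 1^{+}$ squeeze for uniqueness of the pair $(U,v)$ at fixed $v_0$ — so your additional steps merely make explicit what the paper leaves implicit (cf.\ Remark \ref{form}).
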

\begin{proof}
Lemma \ref{lemn}-\ref{lem2} imply that the mapping $\Phi$ is decreasing and continuous on $(0,+\infty)$. Thus $\Phi$ is invertible on $(0,+\infty)$ and we deduce the uniqueness.
\end{proof}
\begin{remark}\label{form}
Using the scaling property and the previous results, we have, for any $R>0$, $\Phi^{-1}(R)=\Phi^{-1}(1)R^{-\beta_0}$ and the unique solution $(U_R,v_R)$ on $(0,R)$ of \eqref{uniq} is given by 
\begin{equation*}
U_R=R^{-\alpha_0}  U_1\left( \dfrac{.}{R}\right) \quad \mbox{and} \quad v_R= R^{-\beta_0} v_1 \left( \dfrac{.}{R}\right)
\end{equation*} 
where $(U_1,v_1)$ is the unique solution of \eqref{uniq} on $(0,1)$.
\end{remark}
\begin{remark}\label{stabi}
Let $\varepsilon>0$ and let  $(U,v)$ and $(U_\varepsilon,v_\varepsilon)$ two pairs of solutions of \eqref{uniq} such that $ v(0)-\tilde v(0)=\pm\varepsilon$. From Remark \ref{form}, there exists $\lambda_\varepsilon$ such that $\lambda_\varepsilon\to 1 $ as $\varepsilon \to 0$, 
$\tilde v_\varepsilon=\lambda_\varepsilon^{-\beta_0}v(\frac{.}{\lambda_\varepsilon})$ and $\tilde U_\varepsilon=\lambda_\varepsilon^{-\alpha_0}U(\frac{.}{\lambda_\varepsilon})$.\\
By the proof of Lemma \ref{lemn}, the sequences $(v_\varepsilon)_\varepsilon$, $(U_\varepsilon)_\varepsilon$ and $(v'_\varepsilon)_\varepsilon$ are monotone and converge everywhere to $v$, $U$ and $v'$ on the compact sets of $[0,R)$. By Dini's Theorem, we deduce the uniform convergence on the compact sets of $[0,R)$.
\end{remark}

\section{ Asymptotic behavior and proof of Theorem $ \ref{theorem1} $}
Through this section, we only consider $\delta<0$, $\alpha_0>0$ and $R=1$. Moreover we have $\alpha_{0}(p-1-\alpha)+1$, $\beta_0$ and $\gamma_0$ are positive. At the end of the section, we  give the proof of Theorem \ref{theorem1}.\\
Let  $ (u,v) $  be a pair of nonconstant positive radial solutions of \eqref{otoo23} given by Theorem \ref{khaled4}. Then, setting $ U=u'$ and $ V=v'$ in  \eqref{otoo36}, $ U $ and $V$ are positive, increasing on $(0,1)$ and satisfy 
\begin{equation}\label{eq1}
\begin{cases}
\left( U^{p-1-\alpha}\right)' +\dfrac{\gamma}{r}U^{p-1-\alpha}=\frac{\gamma}{N-1} v^m
& \text{for } \hspace{0.2 cm}r\in (0,1),\\
\left( V^{p-1}\right)' +\dfrac{N-1}{r} V^{p-1}=v^\beta U^{q}
& \text{for } \hspace{0.2 cm}r\in (0,1),\\
U(0)=V(0)=0. &
\end{cases}
\end{equation}
Now, set
\begin{center}
$ U(r)=\dfrac{\lambda }{(1-r)^{\alpha_{0}}}a(r) $, $ v(r)=\dfrac{\mu }{(1-r)^{\beta_{0}}}b(r) $ and $ V(r)=\dfrac{\nu }{(1-r)^{\gamma_{0}}}c(r) $
\end{center}
where 
$ \lambda$, $\mu$, $\nu>0$ to be fixed later. Thus, $ \eqref{eq1} $ leads to
\begin{equation}\label{eq2}
\begin{cases}
\left( \left( \frac{\lambda a(r)}{(1-r)^{\alpha_{0}}} \right)^{p-1-\alpha}\right)' +\frac{\gamma}{r}\left( \frac{\lambda a(r)}{(1-r)^{\alpha_{0}}} \right)^{p-1-\alpha}=\frac{p-1-\alpha}{p-1}\left( \frac{\mu b(r)}{(1-r)^{\beta_{0}}} \right)^m
& \text{for } \hspace{0.2 cm}r\in (0,1), \\
\left( \left( \frac{\nu c(r)}{(1-r)^{\gamma_{0}}} \right)^{p-1}\right)' +\frac{N-1}{r} \left( \frac{\nu c(r)}{(1-r)^{\gamma_{0}}} \right)^{p-1}=\left( \frac{\mu b(r)}{(1-r)^{\beta_{0}}} \right)^\beta \left( \frac{\lambda a(r)}{(1-r)^{\alpha_{0}}} \right)^{q}
& \text{for } \hspace{0.2 cm}r\in (0,1).
\end{cases}
\end{equation}
Simple computations give $\alpha_{0}(p-1-\alpha)+1=m\beta_{0}$ and 
$$
\left( \left( \dfrac{\lambda a(r)}{(1-r)^{\alpha_{0}}} \right)^{p-1-\alpha}\right)' =  (p-1-\alpha)\lambda^{p-1-\alpha} \dfrac{(1-r)a'(r)+\alpha_{0} a(r)}{(1-r)^{m\beta_0}}a(r)^{p-2-\alpha} .$$
Thus, multiplying the first equation of \eqref{eq2} by $ (1-r)^{m\beta_0 }$ 
choosing $\lambda$ and $\mu$ such that $ \lambda^{p-1-\alpha}\alpha_{0}(p-1-\alpha)=\frac{\gamma}{N-1}\mu^{m}$, this yields for any $r\in (0,1)$ 
\begin{equation}\label{eq5}
(1-r)a^{p-\alpha-2}((p-1-\alpha)a' +\dfrac{\gamma}{r}a) =\alpha_{0}(p-1-\alpha)\left( b^m -a^{p-1-\alpha}\right).
\end{equation}
In the same way, noting that $\gamma_{0}(p-1)+1=\beta \beta_{0}+ \alpha_{0}q$ and  multiplying the second equation of \eqref{eq2} by \break $ (1-r)^{\beta\beta_0+\alpha_0q }$, we get  
\begin{equation}\label{eq8}
(1-r)c^{p-2}((p-1)c' +\dfrac{N-1}{r} c)=\gamma_{0}(p-1) \left( b^{\beta}a^{q}-c^{p-1} \right)
\end{equation}
where $\lambda$, $\mu$ and $\nu$ satisfy $ \mu^{\beta}\lambda^{q}= \gamma_{0}(p-1) \nu^{p-1} $.\\ 
Moreover, since $V= v'$, we deduce that
\begin{equation}\label{eq9}
\mu (b'(r)(1-r)+\beta_{0} b(r))=\nu c(r).
\end{equation}
Choosing $\mu$ and $\nu$ such that $\beta_{0} \mu=\nu $, $ \eqref{eq9} $ becomes
\begin{equation}\label{eq10}
 b'(r)(1-r)=\beta_{0}\left( c(r)-b(r) \right).
\end{equation}
Hence, we deduce that the triplet $( a,b,c)$ satisfies
\begin{equation}\label{eq11}
\begin{cases}
(1-r)a^{p-\alpha-2}((p-1-\alpha)a' +\dfrac{\gamma}{r}a) =\alpha_{0}(p-1-\alpha)\left( b^m -a^{p-1-\alpha}\right),\\
\vspace{0.2cm}
(1-r)c^{p-2}((p-1)c' +\dfrac{N-1}{r} c)=\gamma_{0}(p-1) \left( b^{\beta}a^{q}-c^{p-1} \right),\\
\vspace{0.2cm}
(1-r)b'=\beta_{0}\left( c-b \right)
\end{cases}
\end{equation}
under the following relations
\begin{equation*}
\lambda^{p-1-\alpha}\alpha_{0}=\dfrac{\mu^{m}}{p-1},\qquad
\mu^{\beta}\lambda^{q}= \gamma_{0}(p-1) \nu^{p-1},\qquad
\beta_{0} \mu=\nu.
\end{equation*}
which leads to
\begin{equation}\label{eq13}
\begin{cases}
\lambda=\left( \gamma_0(p-1)\beta_0^{p-1}(\alpha_0(p-1))^{\frac{p-1-\beta}{m}}\right)^{-\frac{m}{\delta}},\\
\vspace{0.2cm}
\mu= \left(\gamma_0(p-1)\beta_0^{p-1}(\alpha_0(p-1))^{\frac{q}{p-1-\alpha}}\right)^{-\frac{p-1-\alpha}{\delta}}, \\
\vspace{0.2cm}
\nu=\beta_{0} \left(\gamma_0(p-1)\beta_0^{p-1}(\alpha_0(p-1))^{\frac{q}{p-1-\alpha}}\right)^{-\frac{p-1-\alpha}{\delta}}.
\end{cases}
\end{equation}
Consider now the following change of variable $r=1-e^{-t}$ {\it i.e.} $t=- \ln (1-r)$ and we define $\mathcal X $, $\mathcal  Y $ and $\mathcal  Z $ as follows
\begin{center}
$\mathcal  X(t)=a^{p-1-\alpha}(r) $, $\mathcal  Y(t)=b(r) $ and $\mathcal  Z(t)=c^{p-1}(r). $
\end{center}
Hence, $( \mathcal X,\mathcal  Y,\mathcal  Z) $ are positive functions satisfying the following $ C^1$-asymptotically autonomous system on $  [t_0,+\infty)$ for some $t_0>0$:
\begin{equation}\label{eq14}
\left\{\begin{array}{l}
X'(t)+\dfrac{\gamma}{e^{t}-1}X(t)=\alpha_{0}(p-1-\alpha)(|Y|^{m-1}(t)Y(t)-X(t)) 
\\ 
Y'(t)=\beta_{0}(|Z|^{\frac{1}{p-1}-1}(t)Z(t)-Y(t)) 
\\
Z'(t)+\dfrac{N-1}{e^{t}-1}Z(t)=\gamma_{0}(p-1)(|Y|^{\beta-1}(t)Y(t)|X|^{\frac{q}{p-1-\alpha}-1}(t)X(t)-Z(t)) 
\\
\end{array}\right.
\end{equation}
for some $(X(t_0),Y(t_0),Z(t_0))\in \R^3$. \noindent The asymptotic dynamical system associated to  \eqref{eq14} is given by: 
\begin{equation}\label{2eq14}
\left\{\begin{array}{l}
X'(t)=\alpha_{0}(p-1-\alpha)(| Y |^{m-1}Y-X\\
Y'(t)=\beta_{0}(| Z |^{\frac{1}{p-1}-1}Z-Y) \\
Z'(t)=\gamma_{0}(p-1)(| Y |^{\beta-1}Y | X |^{\frac{q}{p-1-\alpha}-1}X-Z) 
\end{array}\right.
\end{equation} 
The system $\eqref{2eq14} $ can be rewritten as $\zeta'(t)=g(\zeta)$, $\zeta(t_0)=(X(t_0),Y(t_0),Z(t_0)\in \R^3$ where
\begin{equation}\label{eq16}
\zeta(t)=\left(
\begin{array}{c}
X(t)\\
Y(t)\\
Z(t)
\end{array}\right) \ \text{and}\ g(\zeta)=\left(
\begin{array}{c}
\alpha_{0}(p-1-\alpha)(| Y |^{m-1}Y-X)\\
\beta_{0}(| Z |^{\frac{1}{p-1}-1}Z-Y)\\
\gamma_{0}(p-1)(| Y |^{\beta-1}Y | X |^{\frac{q}{p-1-\alpha}-1}X-Z) 
\end{array}\right) .
\end{equation}
The system \eqref{2eq14} is cooperative, $ \mathrm{div}\,g<0 $ and we have the following properties.
\begin{pro}\label{nature of equilibrium} The following assertions hold:
\begin{enumerate}
\item $\left( 0, 0, 0 \right)$ and $\left( 1, 1, 1\right)$ are the only equilibrium points of the system $ \eqref{2eq14} $. 
\item $\left( 0, 0, 0 \right)$ is asymptotically stable {(and then a sink)}.
\item $\left( 1, 1, 1\right)$ is {an hyperbolic saddle point}.
\end{enumerate}
\end{pro}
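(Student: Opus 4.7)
The plan is to handle the three assertions separately. Items (1) and (3) are algebraic and reduce to a sign check and a Jacobian computation respectively; the delicate point is (2), since the nonlinearities may fail to be $C^1$ at the origin and a direct linearization argument is unavailable.

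For (1), an equilibrium of \eqref{2eq14} must satisfy
$X=|Y|^{m-1}Y$, $Y=|Z|^{\frac{1}{p-1}-1}Z$ and $Z=|Y|^{\beta-1}Y\,|X|^{\frac{q}{p-1-\alpha}-1}X$.
A quick sign chase shows that $X$ inherits the sign of $Y$, $Y$ the sign of $Z$, and $Z$ the sign of $YX\geq 0$, so any nonzero equilibrium has all three coordinates strictly positive. In that regime the relations reduce to $X=Y^m$, $Z=Y^{p-1}$ and
\[ Y^{p-1}=Y^{\beta+\frac{mq}{p-1-\alpha}}. \]
The exponent difference equals $\delta/(p-1-\alpha)$, which is nonzero by \eqref{otoo24}, so $Y=1$ is forced; together with the trivial solution this yields $(0,0,0)$ and $(1,1,1)$ as the only equilibria.

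For (3), I linearize at $(1,1,1)$, which is legitimate since all the arguments are positive and the nonlinearities are smooth there. A short computation produces
\[ J=\begin{pmatrix}-\alpha_0(p-1-\alpha) & m\alpha_0(p-1-\alpha) & 0 \\ 0 & -\beta_0 & \beta_0/(p-1) \\ \gamma_0\,q(p-1)/(p-1-\alpha) & \beta\gamma_0(p-1) & -\gamma_0(p-1)\end{pmatrix}, \]
with $\operatorname{tr}(J)<0$ and, after expansion along the first column, $\det(J)=-\alpha_0\beta_0\gamma_0\,\delta$, which is strictly positive under the standing assumption $\delta<0$. Non-vanishing determinant excludes a zero eigenvalue; the sign combination $\operatorname{tr}<0$, $\det>0$ then forces exactly one eigenvalue with positive real part and two with negative real part, in both the three-real and the one-real plus complex-conjugate-pair cases. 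Hence $(1,1,1)$ is a hyperbolic saddle.

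The main obstacle is (2): the exponents $m$, $1/(p-1)$, $\beta$ and $q/(p-1-\alpha)$ are only assumed positive, so the right-hand side of \eqref{2eq14} is not $C^1$ at the origin and the linearization theorem is not directly applicable. I would work in the positive octant $\{X,Y,Z>0\}$, which is forward-invariant by the cooperative structure, and construct a Lyapunov function of the form $V=c_1 X^{\theta_1}+c_2 Y^{\theta_2}+c_3 Z^{\theta_3}$ with $c_i,\theta_i>0$. Applying Young's inequality to the three cross terms generated by $\dot V$ produces a system of linear inequalities in the $\theta_i$; its solvability is equivalent to the loop-gain exponent $\bigl(\beta+\frac{mq}{p-1-\alpha}\bigr)/(p-1)$ exceeding one, which is precisely $-\delta>0$. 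This gives $\dot V\leq -CV$ in a neighbourhood of $0$ and hence local asymptotic stability in the octant; the cooperative comparison, together with $\operatorname{div} g<0$ already recorded before the statement (ruling out nontrivial recurrent $\omega$-limit behaviour), then extends the conclusion and shows that $(0,0,0)$ is a sink.
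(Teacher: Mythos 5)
Your assertions 1 and 3 are handled correctly. For assertion 1 you follow essentially the paper's route: the same sign chase on the equilibrium relations, then reduction to a single power identity whose exponent gap is $\delta/(p-1-\alpha)\neq 0$ by \eqref{otoo24}. For assertion 3 you compute the same matrix $M_1$ but close the argument differently: the paper notes that the coefficients of the characteristic polynomial satisfy $C_1+C_2+C_3=-1$, so $\lambda_1=1$ is an exact eigenvalue, and then controls the other two via $\lambda_2+\lambda_3<0$ and $\lambda_2\lambda_3=-\alpha_0\beta_0\gamma_0\delta>0$; you use only $\operatorname{tr}M_1<0$ and $\det M_1=-\alpha_0\beta_0\gamma_0\delta>0$ and run the two-case analysis (three real eigenvalues versus one real plus a conjugate pair). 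Both arguments are valid under the standing assumption $\delta<0$ of Section 3; the paper's version yields the extra (unused) information that the unstable eigenvalue equals $1$, while yours is marginally more economical.

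The problem is assertion 2. You are right that the paper's one-line argument (reading off the diagonal matrix $M_0$) tacitly assumes $g$ is differentiable at the origin, which fails when the exponents are small: the off-diagonal Jacobian entries involve $m|Y|^{m-1}$ and $\tfrac{1}{p-1}|Z|^{\frac{1}{p-1}-1}$, which are unbounded near $0$ when $m<1$ or $p>2$, and similarly for the third component. Flagging this is a genuine and worthwhile observation. But your replacement is a plan, not a proof. Concretely: (i) the cross terms in $\dot V$ have the form $X^{\theta_1-1}Y^m$, $Y^{\theta_2-1}Z^{1/(p-1)}$ and $Z^{\theta_3-1}Y^{\beta}X^{q/(p-1-\alpha)}$, which blow up as the corresponding variable tends to $0$ whenever $\theta_i<1$, so Young's inequality does not automatically absorb them into $-cV$; you never exhibit admissible $(c_i,\theta_i)$ nor verify that the system of inequalities you invoke is solvable precisely when $\delta<0$ --- the loop-gain heuristic is correct, but it is the entire content of the claim and must actually be carried out. (ii) Asymptotic stability concerns a full neighbourhood of the origin in $\R^3$, whereas your Lyapunov function lives only in the open positive octant; the closing appeal to ``cooperative comparison'' would require sandwiching an arbitrary small initial datum, including one with negative coordinates, between sub- and supersolutions converging to $0$, and this is not automatic since $g$ is not odd (its third component contains the product $|Y|^{\beta-1}Y\,|X|^{\frac{q}{p-1-\alpha}-1}X$, which is positive in the negative octant). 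As written, assertion 2 is therefore not established by your argument, even though you have correctly diagnosed why the paper's own argument is incomplete.
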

\begin{proof} Let $ P=\left(X_e,Y_e,Z_e \right) $ be an equilibrium point of the system \eqref{2eq14}, then $g(P)=0$ which implies that 
\begin{equation}\label{zizo1}
\left\{
\begin{array}{l}
|Y_e|^{m-1}Y_e=X_e,\\
|Z_e|^{\frac{1}{p-1}-1}Z_e=Y_e,\\
|Y_e|^{\beta-1}|X_e|^{\frac{q}{p-1-\alpha}-1}Y_eX_e=Z_e.
\end{array}\right.
\end{equation}
Proof of {\it 1.}: Obviously,  $ Y_e=0 $ yields $ X_e=Z_e=0 $ and solve the system \eqref{zizo1}.\\
If $ Y_e \neq 0$, from \eqref{zizo1}, we deduce that $ X_e,Y_e $ and $ Z_e $ have the same sign and that\break  $ | X_e|^{\frac{q}{p-1-\alpha}-1}X_e=| Y_e|^{p-2-\beta}Y_e $. Thus, the first equation of $ \eqref{zizo1} $ implies that $ Y_e^{\frac{\delta}{q}}=1 $. Hence, we get $ Y_e=X_e=Z_e=1 $.\\
\ \\
Proof of {\it 2.}: The assertion follows directly looking the linearized matrix at $\left( 0, 0, 0 \right)$ given by: 
\begin{center}
$M_0=\left( \begin{array}{ccc}
-\alpha_{0}(p-1-\alpha) & 0 & 0\\
\vspace{0.2cm}
0 & -\beta_{0} & 0\\
\vspace{0.2cm}
0 & 0 & -\gamma_{0}(p-1)
\end{array} \right)$.
\end{center}
Proof of {\it 3.}:
The linearized matrix at $\left( 1, 1, 1 \right)$ is given by
\begin{center}
$M_1= \left( \begin{array}{ccc}
-\alpha_{0}(p-1-\alpha) & m\alpha_{0}(p-1-\alpha) & 0\\
\vspace{0.2cm}
0 & -\beta_{0} & \dfrac{\beta_{0}}{p-1}\\
\vspace{0.2cm}
\dfrac{q \gamma_{0}(p-1)}{p-1-\alpha} & \beta \gamma_{0}(p-1) & -\gamma_{0}(p-1)
\end{array} \right) $.
\end{center}
Thus, $det( \lambda I - M_1)  = \lambda^{3}+C_{1}\lambda^{2}+C_{2}\lambda+ C_{3} $ where
\begin{center}
$\begin{array}{l}
C_{1}=\alpha_{0}(p-1-\alpha)+\beta_{0}+\gamma_{0}(p-1),\\
C_{2}=\beta_{0} \gamma_{0}(p-1-\beta)+\alpha_{0}(p-1-\alpha)(\beta_{0}+\gamma_{0}(p-1)),\\
C_{3}=\alpha_{0}\beta_{0} \gamma_{0}\delta.
\end{array}$
\end{center}
Since $ \beta_{0}+1=\gamma_{0} $, $ \alpha_{0}(p-1-\alpha)+1= \beta_{0}m $ and $ \gamma_{0}(p-1)+1=\beta \beta_{0}+\alpha_{0}q$, we get $ C_{1}+C_{2}+C_{3}=-1$.
Hence $ \lambda_{1}=1 $ is an eigenvalue of $ M_1 $.\\
Let us call $ \lambda_{2} $, $ \lambda_{3} $ the two other eigenvalues of $ M_1 $. Then we have $\lambda_{2}+\lambda_{3}=-C_{1}-1<0$ and $  \lambda_{2}\lambda_{3}=-C_{3}=-\alpha_{0}\beta_{0} \gamma_{0}\delta >0 $.\\
Thus, $ Re(\lambda_{2}),\ Re(\lambda_{3})<0 $ and we deduce that $\left( 1, 1, 1 \right)$ is hyperbolic point with $2$-dimensional stable manifold.
\end{proof}
\noindent We have the following properties about the dynamical system \eqref{eq14}:
\begin{pro}\label{pos}
Let $T>0$ and $t_0\in (0,T)$. Assume $(X,Y,Z)$ a triplet of functions satisfying \eqref{eq14} on $[t_0,T)$ with $(X(t_0),Y(t_0),Z(t_0))\in \IT\R^3_+$. Then, for any $t\in [t_0,T)$, $(X(t),Y(t),Z(t))\in \IT \R^3_+$.
\end{pro}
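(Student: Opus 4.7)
The plan is to carry out a standard invariance argument for the positive orthant, exploiting the fact that each scalar equation in \eqref{eq14} can be rewritten as a linear ODE in the corresponding unknown whose forcing term is non-negative as soon as the other two unknowns are non-negative. Concretely, I would set $t^\ast \eqdef \sup\{t\in[t_0,T) : X(s),Y(s),Z(s) > 0 \text{ for all }s\in[t_0,t]\}$; since $(X(t_0),Y(t_0),Z(t_0))\in\IT\R^3_+$ and the solution is continuous, $t^\ast > t_0$. Arguing by contradiction, I would assume $t^\ast < T$, so that on the closed interval $[t_0,t^\ast]$ the three functions remain $\geq 0$ by continuity while at least one of $X(t^\ast),Y(t^\ast),Z(t^\ast)$ vanishes.

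The key step is three parallel integrating-factor estimates. From the second equation of \eqref{eq14},
\begin{equation*}
Y'(t) + \beta_0 Y(t) = \beta_0 |Z(t)|^{\frac{1}{p-1}-1}Z(t) \geq 0 \quad\text{on }[t_0,t^\ast],
\end{equation*}
since $Z\geq 0$ there; multiplying by $e^{\beta_0 t}$ and integrating gives $Y(t) \geq e^{-\beta_0(t-t_0)}Y(t_0) > 0$. The first equation rearranges to
\begin{equation*}
X'(t) + \Bigl(\tfrac{\gamma}{e^t-1} + \alpha_0(p-1-\alpha)\Bigr) X(t) = \alpha_0(p-1-\alpha)\,|Y(t)|^{m-1}Y(t) \geq 0,
\end{equation*}
and the analogous computation yields $X(t) \geq X(t_0)\exp\!\bigl(-\int_{t_0}^{t}[\tfrac{\gamma}{e^s-1}+\alpha_0(p-1-\alpha)]\,ds\bigr) > 0$ on $[t_0,t^\ast]$. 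The third equation is handled identically after moving the linear term $\gamma_0(p-1)Z$ to the left-hand side, producing $Z(t) > 0$ on the same interval.

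The contradiction is then immediate: at $t=t^\ast$ all three components admit strictly positive lower bounds, ruling out the vanishing of any of them, whence $t^\ast = T$. The only point that requires a moment of care is the integrability of the singular coefficient $1/(e^s-1)$ appearing in the $X$ and $Z$ equations; this is guaranteed by the standing assumption $t_0 > 0$, so the exponential factors appearing in the integrating-factor bounds stay uniformly bounded away from $0$ on each compact subinterval of $[t_0,T)$. No further ingredient — comparison, fixed point, or invariance theorem for cooperative systems — is needed for this statement.
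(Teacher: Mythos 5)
Your proof is correct and follows essentially the same first-exit-time invariance argument as the paper: both define the supremum of times up to which all three components stay positive, derive a contradiction, and use the integrating factor $e^{\beta_0 t}$ on the $Y$-equation. The only (harmless) difference is that for $X$ and $Z$ the paper evaluates the equations at the putative vanishing time to get $X'(T^*)=\alpha_0(p-1-\alpha)Y^m(T^*)>0$ (so the singular coefficient $\gamma/(e^t-1)$ drops out automatically), whereas you absorb that coefficient into the integrating factor and correctly observe that $t_0>0$ guarantees its integrability.
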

\begin{proof}
Define $T^*=\sup \{t>t_0 \ |\ X(s)>0,\ Y(s)>0 \mbox{ and } Z(s)>0 \mbox{ for any } s\in (0,t)\}$ and assume that $T^*<T$.\\
From the second equation of \eqref{eq14}, we deduce that for any $t\in [t_0,T^*)$, $(e^{\beta_0 t}Y(t))'>0$ and hence $e^{\beta_0t}Y(t)>e^{\beta_0t_0}Y(t_0)>0$. This implies that $Y(T^*)>0$.\\
Assuming $X(T^*)=0$, the first equation of \eqref{eq14} yields for $t=T^*$, $X'(T^*)=\alpha_{0}(p-1-\alpha)Y^{m}(T^*)>0$. This implies there exists a small neighbourhood $(T^*-\eta,T^*+\eta)$ of $T^*$ such that $X'$ is positive on $(T^*-\eta,T^*+\eta)$. Since $X(T^*)=0$, we deduce $X(t)<0$ on $(T^*-\eta, T^*)$ which contradicts the definition of $T^*$.\\
In the same, using the third equation of \eqref{eq14}, we can not have $Z(T^*)=0$ and hence we conclude $T^*=T$.
\end{proof}
\begin{remark}\label{marchencore}
Proposition \ref{pos} holds considering a triplet of functions satisfying the autonomous system \eqref{2eq14}.
\end{remark}
\begin{pro}\label{inf1}
Let $T>0$ and $t_0\in (0,T)$. Let $(X,Y,Z)$ be a triplet of functions satisfying \eqref{eq14} on $[t_0,T)$ with $(X(t_0), Y(t_0),Z(t_0))\in \IT \R^3_+$. Assume 
$X(t_0)<1$, $Y(t_0)<1$ and $Z(t_0)<1$. Then, for any $t\in [t_0,T)$, $ X(t)<1$, $Y(t)<1 $ and $ Z(t)<1 $.
\end{pro}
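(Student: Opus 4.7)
The plan is to show that the set $(0,1)^3$ is positively invariant under the (non-autonomous) flow \eqref{eq14}, by a standard ``first exit time'' argument. Define
$$T^{*}=\sup\bigl\{t\in[t_0,T)\,:\, X(s)<1,\ Y(s)<1,\ Z(s)<1 \text{ for all } s\in[t_0,t)\bigr\}.$$
By the strict inequalities at $t_0$, $T^{*}>t_0$. Suppose for contradiction that $T^{*}<T$. By continuity of the solution, all three components belong to $[0,1]$ at $T^{*}$ and at least one of $X(T^{*})$, $Y(T^{*})$, $Z(T^{*})$ equals $1$. Moreover, since Proposition \ref{pos} guarantees $X,Y,Z>0$ on $[t_0,T)$, the value $0$ is excluded, so each component lies in $(0,1]$ at $T^{*}$.

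The core idea is then a sign comparison at $T^{*}$: for any component that attains the value $1$ at $T^{*}$ while staying strictly below $1$ on $[t_0,T^{*})$, its derivative at $T^{*}$ must be $\geq 0$, whereas the ODE forces it to be $<0$.

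\textbf{Case} $X(T^{*})=1$. From the first equation of \eqref{eq14} and $Y(T^{*})\le 1$,
$$X'(T^{*})=-\frac{\gamma}{e^{T^{*}}-1}+\alpha_{0}(p-1-\alpha)\bigl(Y(T^{*})^{m}-1\bigr)<0,$$
since $\gamma/(e^{T^{*}}-1)>0$. This contradicts $X'(T^{*})\geq 0$.

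\textbf{Case} $Z(T^{*})=1$. Analogously, from the third equation of \eqref{eq14} together with $X(T^{*}),Y(T^{*})\le 1$,
$$Z'(T^{*})=-\frac{N-1}{e^{T^{*}}-1}+\gamma_{0}(p-1)\bigl(Y(T^{*})^{\beta}X(T^{*})^{q/(p-1-\alpha)}-1\bigr)<0,$$
again a contradiction.

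\textbf{Case} $Y(T^{*})=1$ while $X(T^{*})<1$ and $Z(T^{*})<1$. The second equation gives
$$Y'(T^{*})=\beta_{0}\bigl(Z(T^{*})^{1/(p-1)}-1\bigr)<0,$$
contradicting $Y'(T^{*})\geq 0$.

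Since these three cases cover every possibility in which at least one component attains $1$ at $T^{*}$, we conclude $T^{*}=T$, which yields the claim.

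The only delicate point is the $Y$-equation: it has no singular dissipative term $\frac{c}{e^t-1}$, so we cannot obtain strict negativity of $Y'(T^{*})$ from that term alone. The fix is that whenever $Y=1$ and the argument would degenerate (i.e.\ $Z(T^{*})=1$ as well), the dissipative term in the $Z$-equation already provides the required contradiction via the second case. No other step requires work: continuity of the solution and the signs of $\gamma$, $N-1$, $\alpha_0$, $\beta_0$, $\gamma_0$ (positive under the standing assumptions $\delta<0$, $\alpha_0>0$, $p>1$, $N\ge 2$) suffice.
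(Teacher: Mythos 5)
Your proof is correct. It shares the paper's skeleton --- a first-exit time (your $T^{*}$, the paper's $\tau$), Proposition \ref{pos} for positivity, and a component-by-component contradiction in the order $Z$, then $Y$, then $X$ --- but the mechanism producing the contradiction is genuinely different. The paper discards the singular terms $\frac{\gamma}{e^{t}-1}X$ and $\frac{N-1}{e^{t}-1}Z$ (they have a favourable sign), rewrites the system as differential inequalities for $X-1$, $Y-1$, $Z-1$ with integrating factors (this is \eqref{name}), and integrates from $t_0$ to $\tau$: the strictness of the conclusion is inherited from the strict inequalities at $t_0$, propagated by the exponential weights. You instead keep the singular terms and evaluate the derivative at the touching time: $X(T^{*})=1$ or $Z(T^{*})=1$ forces $X'(T^{*})<0$ or $Z'(T^{*})<0$ precisely because of the extra dissipation $-\gamma/(e^{T^{*}}-1)$ or $-(N-1)/(e^{T^{*}}-1)$, while the $Y$-equation, which has no such term, is handled by first ruling out $Z(T^{*})=1$ so that $Y'(T^{*})=\beta_{0}(Z(T^{*})^{1/(p-1)}-1)<0$ strictly. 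Your argument is more local (it only inspects the touching time) but relies essentially on the non-autonomous singular terms, so it would not transfer verbatim to the limiting autonomous system \eqref{2eq14b}, whereas the paper's integrating-factor argument uses only the strict initial data and would. For the statement as given, which concerns \eqref{eq14} only, both proofs are complete; your case split does correctly cover the possibility that several components reach $1$ simultaneously, which is the one point requiring care in this kind of argument.
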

\begin{proof}
By Proposition \ref{pos},  for any $t\in [t_0,T)$, $(X(t),Y(t),Z(t))\in \IT \R^3_+$ and hence we deduce
\begin{equation*}
    \left\{
\begin{array}{l}
(X-1)'(t) <  \alpha_0 (p-1-\alpha)\left( Y^m-1-(X-1)\right),\\
(Y-1)'(t)=  \beta_0 \left( Z^{\frac{1}{p-1}}-1-(Y-1)\right),\\
(Z-1)'(t) <  \gamma_0 (p-1)\left( Y^\beta X^{\frac{q}{p-1-\alpha}}-1-(Z-1)\right)
  \end{array}\right.
\end{equation*}
and we get
\begin{equation}\label{name}
    \left\{
\begin{array}{l}e^{-\alpha_0 (p-1-\alpha)t}\left( e^{\alpha_0 (p-1-\alpha)t}(X-1)\right)' <\alpha_0 (p-1-\alpha)( Y^m-1),\\
e^{-\beta_0 t}\left(e^{\beta_0 t}(Y-1)\right)' = \beta_0 \left( Z^{\frac{1}{p-1}}-1\right),\\
e^{-\gamma_0 (p-1)t}\left(e^{\gamma_0 (p-1)t}(Z-1)\right)' <  \gamma_0 (p-1)\left( Y^\beta X^{\frac{q}{p-1-\alpha}}-1\right).
  \end{array}\right.
\end{equation}
Define $\tau=\sup\{t>t_0\ |\ X(t)<1,\ Y(t)<1 \mbox{ and } Z(t)<1 \mbox{ on } (t_0,t)\}$.\\
Assume $\tau<T$ then $X(\tau)=1$ or $Y(\tau)=1$ or $Z(\tau)=1$. \\
In any cases, the third equation of \eqref{name} at $t=\tau$ implies that $Z(t)<1$ on the interval $ [\tau,\tau+\eta] $ for some $ \eta > 0 $. Hence, if $Z(\tau)=1$, we get a contradiction.\\
Otherwise, using the previous estimate in the second equation of \eqref{name}, we get $Y(\tau)<1$ which would contradict $Y(\tau)=1$. Finally, using $Y(\tau)<1$ in the first equation of \eqref{name}, this yields $X(\tau)<1$ and a contradiction with $X(\tau)=1$.
\end{proof}
\noindent For $t_0$ small enough, $(\mathcal X(t_0),\mathcal Y(t_0),\mathcal Z(t_0))\in \IT \R^3_+$, hence the triplet $(\mathcal X,\mathcal Y,\mathcal Z)$ satisfies the $C^1$-asymptotically autonomous system
\begin{equation}\label{eq14b}
\left\{\begin{array}{l l}
X'(t)+\dfrac{\gamma}{e^{t}-1}X(t)=\alpha_{0}(p-1-\alpha)(Y^{m}(t)-X(t)) &\mbox{ on } [t_0,\infty)\\ 
Y'(t)=\beta_{0}(Z^{\frac{1}{p-1}}(t)Z(t)-Y(t)) &\mbox{ on } [t_0,\infty)\\
Z'(t)+\dfrac{N-1}{e^{t}-1}Z(t)=\gamma_{0}(p-1)(Y^{\beta}(t)X^{\frac{q}{p-1-\alpha}}(t)-Z(t)) &\mbox{ on } [t_0,\infty)\\
\end{array}\right.
\end{equation}
with $(X(t_0),Y(t_0),Z(t_0))\in \IT\R^3_+$. Hence we get the following asymptotic dynamical system:
\begin{equation}\label{2eq14b}
\left\{\begin{array}{l}
X'=\alpha_{0}(p-1-\alpha)(Y^{m}-X) \\
Y'=\beta_{0}(Z^{\frac{1}{p-1}}-Y) \\
Z'=\gamma_{0}(p-1)(Y^{\beta}X^{\frac{q}{p-1-\alpha}}-Z).
\end{array}\right.
\end{equation} 
Now, we study the trajectories $\left(\mathcal X,\mathcal Y,\mathcal  Z \right)$. The first result is the relatively compactness:
\begin{pro}\label{prop1}
The solution $\left(\mathcal X,\mathcal Y,\mathcal  Z \right)$ is bounded as $ t \to +\infty $.
\end{pro}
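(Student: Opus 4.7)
I plan to reduce the claim to a uniform bound on $\mathcal Y$ alone, and then rule out its unboundedness by a blowup-rescaling argument culminating in a one-dimensional Liouville-type non-existence. Each equation in \eqref{eq14b} is a first-order linear scalar ODE in its left-hand unknown, with strictly positive autonomous damping ($A_X=\alpha_0(p-1-\alpha)$, $\beta_0$, $A_Z=\gamma_0(p-1)$) that the non-autonomous tails $\gamma/(e^t-1)$ and $(N-1)/(e^t-1)$ only strengthen. Variation of constants together with positivity (Proposition \ref{pos}) produces, for every $t\ge t_0$,
\begin{align*}
\mathcal X(t)&\le \mathcal X(t_0)+\sup_{[t_0,t]}\mathcal Y^m,\qquad \mathcal Z(t)\le \mathcal Z(t_0)+\sup_{[t_0,t]}\mathcal Y^{\beta}\mathcal X^{\frac{q}{p-1-\alpha}},\\
\mathcal Y(t)&\le \mathcal Y(t_0)+\sup_{[t_0,t]}\mathcal Z^{\frac{1}{p-1}},
\end{align*}
so it suffices to establish $\sup_{t\ge t_0}\mathcal Y(t)<\infty$.

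Assume for contradiction that $\mathcal Y(t_n)\to+\infty$ along some $t_n\to+\infty$, i.e.\ $v(r_n)(1-r_n)^{\beta_0}\to+\infty$ where $r_n:=1-e^{-t_n}$. Using the scaling invariance \eqref{scal}, choose $\lambda_n\to 0$ such that the rescaled pair
$$(\widetilde U_n(\rho),\widetilde v_n(\rho)):=\bigl(\lambda_n^{\alpha_0}U(r_n+\lambda_n\rho),\ \lambda_n^{\beta_0}v(r_n+\lambda_n\rho)\bigr)$$
is normalized at $\rho=0$ (for instance $\widetilde v_n(0)+\widetilde U_n(0)=1$ by selecting $\lambda_n$ proportional to the reciprocal of the larger of $U(r_n)^{1/\alpha_0}$ and $v(r_n)^{1/\beta_0}$). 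The blowup hypothesis forces $\lambda_n/(1-r_n)\to 0$, so the rescaled domains $(-r_n/\lambda_n,(1-r_n)/\lambda_n)$ exhaust $\mathbb R$, and the radial coefficients $\gamma/r$ and $(N-1)/r$ in \eqref{eq1} are multiplied by the vanishing factor $\lambda_n$. Combining the first-step estimates with standard ODE compactness (Ascoli-Arzel\`a), I extract a positive subsequential limit $(\widetilde U_\infty,\widetilde v_\infty)$, globally defined on $\mathbb R$, solving the translation-invariant limit system
$$(\widetilde U_\infty^{p-1-\alpha})'=\tfrac{\gamma}{N-1}\widetilde v_\infty^m,\qquad ((\widetilde v_\infty')^{p-1})'=\widetilde v_\infty^{\beta}\widetilde U_\infty^{q}.$$

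The contradiction is then obtained from a Keller-Osserman-type estimate on this reduced 1D system: integrating the first equation one has asymptotically $\widetilde U_\infty\sim c\,\widetilde v_\infty^{m/(p-1-\alpha)}$, which plugged into the second yields a scalar inequality of the form $((\widetilde v_\infty')^{p-1})'\gtrsim \widetilde v_\infty^{p-1-\delta/(p-1-\alpha)}$; the supercritical exponent (which is $>p-1$ exactly because $\delta<0$) forces $\widetilde v_\infty$ to blow up in finite $\rho$, contradicting its global existence on $\mathbb R$. The main obstacle is precisely this Liouville step: the combined exponent $\rho_*:=1-\delta/((p-1)(p-1-\alpha))>1$ makes a direct $L^\infty$ bootstrap on the Step 1 estimates circular (one only obtains the uninformative inequality $M\le C+CM^{\rho_*}$), so the non-existence of entire positive solutions for the 1D autonomous limit, which does not follow for free from the radial $\mathbb R^N$ result of Theorem \ref{khaled5}, is an essential non-local input, and its careful justification together with the compactness of the rescaling family is where the real work lies.
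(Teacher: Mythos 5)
Your first step (boundedness of $\mathcal X$ and $\mathcal Z$ given a bound on $\mathcal Y$, via the sign of the damping and positivity from Proposition \ref{pos}) is exactly the paper's first step, and your observation that a direct bootstrap is circular because the combined exponent $1-\delta/((p-1)(p-1-\alpha))$ exceeds $1$ is correct and well computed. The problem is that the entire burden of the proof then rests on ruling out unboundedness of $\mathcal Y$, and your argument for that is not actually carried out; it has two genuine gaps. First, the compactness of the rescaled family $(\widetilde U_n,\widetilde v_n)$ is not available for free: normalizing at $\rho=0$ controls the functions for $\rho\le 0$ (by monotonicity of $U$ and $v$), but gives no local uniform bound for $\rho>0$ on compact sets, which is precisely what Ascoli--Arzel\`a needs; to get it one must select the points $t_n$ via a doubling-lemma type argument, and you do not do this. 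Second, the Liouville step is asserted rather than proved: the claim that integrating $(\widetilde U_\infty^{p-1-\alpha})'=\frac{\gamma}{N-1}\widetilde v_\infty^m$ yields $\widetilde U_\infty\sim c\,\widetilde v_\infty^{m/(p-1-\alpha)}$ is unjustified (it amounts to assuming exponential-type growth of $\widetilde v_\infty$), and without it the scalar Keller--Osserman inequality you want does not follow. You acknowledge this is ``where the real work lies,'' which is an admission that the crux is missing. There is also a soft spot in the normalization $\widetilde v_n(0)+\widetilde U_n(0)=1$: it does not prevent $\widetilde v_\infty(0)=0$, in which case the limit could be the trivial entire solution $\widetilde v_\infty\equiv 0$, $\widetilde U_\infty\equiv\mathrm{const}$, and no contradiction arises.

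For comparison, the paper avoids all of this by exploiting machinery already established in Section 2: if $\mathcal Y$ were unbounded, one takes an auxiliary blowing-up solution $(\widehat u,\widehat v)$ of \eqref{otoo36} with $\widehat v(0)>v(0)$, whose blow-up radius $\widehat R$ is strictly less than $1$ by the comparison principle (Proposition \ref{lemn}); then, shifting $(\mathcal X,\mathcal Y,\mathcal Z)$ by a large time $T_0$ with $\mathcal Y(T_0)>\widehat Y(0)$ and reversing the transform, one obtains a pair satisfying the differential inequalities \eqref{otooe1} on $(0,1)$ which dominates $\widehat v$ up to $\widehat R$, forcing it to blow up before $r=1$ --- a contradiction with the fact that $v$ is finite on $(0,1)$. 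This is a purely order-theoretic argument, with no rescaling limit and no Liouville theorem. If you want to salvage your route you would need to supply both the doubling-lemma selection and a genuine nonexistence proof for entire increasing positive solutions of the autonomous one-dimensional system under $\delta<0$; as written, the proposal does not constitute a proof.
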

\begin{proof}
Firstly, assume that $\mathcal  Y $ is bounded, then we have from the first equation of  \eqref{eq14b} \\
\begin{center}
$\mathcal  X'(t)+\alpha_{0}(p-1-\alpha)\mathcal X \leq  C,$
\end{center}
which is equivalent to
\begin{center}
$ \left( e^{\alpha_{0}(p-1-\alpha)t}\mathcal  X \right)' \leq  C  e^{\alpha_{0}(p-1-\alpha)t}.$
\end{center}
Integrating this last inequality from $ t_0 $ to $ t $ leads to
\begin{center}
$  e^{\alpha_{0}(p-1-\alpha)t}\mathcal  X(t)-e^{\alpha_{0}(p-1-\alpha)t_0}\mathcal  X(t_0)  \leq  \dfrac{C}{\alpha_{0}(p-1-\alpha)} \left( e^{\alpha_{0}(p-1-\alpha)t}-e^{\alpha_{0}(p-1-\alpha)t_0} \right). $
\end{center}
Since $\mathcal X$ is positive, this implies that $\mathcal  X $ is bounded. A similar argument implies that $\mathcal  Z $ is bounded.\\
It remains to prove that $\mathcal  Y $ is actually bounded and for this, we proceed by contradiction.\\
Consider  $ \left( \widehat{u},\widehat{v} \right)$ the solution of \eqref{otoo36} with 
$\widehat{u}(0)=u(0)$ and $\widehat{v}(0)> v(0)$ defined in $ (0,\widehat{R})$. From Proposition  \ref{lemn}, it follows that $ \widehat R< 1 $. We define $ ( \widehat{X}, \widehat{Y}, \widehat{Z} ) $ on $(0,-\ln(1-\widehat R))$ the transform of $(\widehat u,\widehat v)$ constructed as previously.\\
We also define $\tilde{X}(t)=\mathcal X(t+T_0) $, $  \tilde{Y}(t)=\mathcal Y(t+T_0) $ and $  \tilde{Z}(t)=\mathcal Z(t+T_0) $
with $ T_0 $ large enough such that $\mathcal  Y( T_0) > \widehat{Y}(0) $ since $\mathcal Y$ is unbounded.
Then $(\tilde u,\tilde v)$ the reverse transform of $ (\tilde{X}, \tilde{Y}, \tilde{Z}) $ satisfies
\begin{equation*}
\begin{cases}
\left( (\tilde u')^{p-1-\alpha}\right)' +\dfrac{\gamma}{r+\tilde r_0}(\tilde u')^{p-1-\alpha}=\dfrac{\gamma}{N-1}\tilde v^m
& \text{in} \quad (0,1), \\
\vspace{0.2cm}
\left( (\tilde v')^{p-1}\right)' +\dfrac{N-1}{r+\tilde r_0}(\tilde v')^{p-1}=v^\beta(\tilde u')^{q} & \text{in} \quad (0,1),\\
\tilde u'(0)\geq 0,\ \tilde v'(0)\geq0,\ \tilde u,\,\tilde v>0& \text{in} \quad (0,1),
\end{cases}
\end{equation*}
where $\tilde r_0=\dfrac{r_0}{1-r_0}$ with $r_0=1-e^{-T_0}$.\\
Noting $(\tilde u',\tilde v)$ satisfies \eqref{otooe1} and $\tilde v(0)> \widehat v(0)$, we apply Proposition \ref{lemn} with $(\widehat u',\widehat v)$, we deduce $\widehat v<\tilde v$ in\break  $(0,min(1,\widehat R))=(0,\widehat R)$. Since $\widehat v$ blows up at $\widehat R$, we get the contradiction.
\end{proof}
\noindent From proposition \ref{prop1}, we deduce the convergence of the trajectories $(\mathcal X,\mathcal Y,\mathcal Z)$:
\begin{pro}\label{prop2} We have
$$\lim_{t\to +\infty} (\mathcal X(t),\mathcal Y(t),\mathcal Z(t))=(1,1, 1).$$
\end{pro}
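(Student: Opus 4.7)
My plan combines boundedness from Proposition \ref{prop1} with the theory of asymptotically autonomous cooperative flows, and rules out the sink $(0,0,0)$ via a linearization argument at that equilibrium. I proceed in three stages.

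\emph{Stage 1 (chain-transitivity of $\Omega$).} By Proposition \ref{prop1} the orbit $\{(\mathcal X(t),\mathcal Y(t),\mathcal Z(t)):t\ge t_0\}$ is bounded, and by Proposition \ref{pos} it remains in $\IT\R^3_+$. Its $\omega$-limit set $\Omega$ is therefore a non-empty, compact, connected subset of $\overline{\R^3_+}$. Since \eqref{eq14b} is $C^1$-asymptotically autonomous with limit \eqref{2eq14b}, the chain-recurrence theorem of Mischaikow--Smith--Thieme (\cite{book32}) gives that $\Omega$ is internally chain-transitive for the autonomous system \eqref{2eq14b}.

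\emph{Stage 2 (reduction to a single equilibrium).} The system \eqref{2eq14b} is cooperative and irreducible on $\IT\R^3_+$ (the off-diagonal couplings $\partial_Y X'$, $\partial_Z Y'$, $\partial_X Z'$ are all strictly positive there), and $\mathrm{div}\,g<0$. A Bendixson--Dulac/volume-contraction argument together with Proposition \ref{nature of equilibrium} rules out periodic orbits, and the monotone-flow theory of \cite{book30}, \cite{book23}, \cite{book31} implies that the only internally chain-transitive sets are the singletons $\{(0,0,0)\}$ and $\{(1,1,1)\}$; indeed, since $(0,0,0)$ is a sink there is no outgoing heteroclinic orbit out of it, so no non-trivial chain recurrence involving both equilibria can occur. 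By connectedness of $\Omega$, we conclude $\Omega\in\{\{(0,0,0)\},\{(1,1,1)\}\}$.

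\emph{Stage 3 (excluding the sink).} Suppose for contradiction $\Omega=\{(0,0,0)\}$. The Jacobian $M_0$ at the sink is diagonal with eigenvalues $-\alpha_0(p-1-\alpha),-\beta_0,-\gamma_0(p-1)$, and the identity $\gamma_0=\beta_0+1$ forces $\beta_0<\gamma_0$. Hence in the $\mathcal Y$-equation of \eqref{eq14b} the source $\beta_0\mathcal Z^{1/(p-1)}$ decays strictly faster than $e^{-\beta_0 t}$ along the trajectory, and variation of parameters on this linear-in-$\mathcal Y$ equation gives $\mathcal Y(t)=O(e^{-\beta_0 t})$ as $t\to+\infty$. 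Undoing the change of variables, $v(r)=\mu\,\mathcal Y(t)\,e^{\beta_0 t}=O(1)$, which contradicts the blow-up $v(r)\to+\infty$ as $r\to 1^-$.

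The main obstacle is Stage 3: translating the local spectral picture at the sink into a sharp bound on $\mathcal Y$ along the actual (nonlinear, non-autonomous) trajectory. The crux is that $\mathcal Y$'s own damping rate ($\beta_0$) is strictly smaller than the decay rate of its only source ($\mathcal Z^{1/(p-1)}$ decaying at rate $\gamma_0=\beta_0+1$), so the source is absorbed into the exponentially decaying particular solution and $\mathcal Y$ cannot decay slower than $e^{-\beta_0 t}$, which is precisely the threshold needed to contradict the prescribed blow-up of $v$.
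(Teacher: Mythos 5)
Your Stages 1--2 follow the same road as the paper (Mischaikow--Smith--Thieme chain recurrence, Hirsch's theory for $3$D cooperative systems with $\operatorname{div} g<0$, Proposition \ref{nature of equilibrium}), though note that ruling out periodic orbits and heteroclinics out of the sink does not by itself dispose of a homoclinic loop at the saddle $(1,1,1)$ -- that is a chain-transitive set not involving the sink, and the paper excludes it separately via transversality of the stable/unstable manifolds. The real divergence, and the real problem, is Stage 3. The paper excludes $\omega_0=\{(0,0,0)\}$ by a completely different mechanism: it perturbs the initial datum to a nearby solution blowing up at $R_\varepsilon<1$, uses continuity of the blow-up radius (Proposition \ref{lem2}, Remark \ref{stabi}) and the forward invariance of the cube $\{X<1,Y<1,Z<1\}$ (Proposition \ref{inf1}) to trap the perturbed $\tilde Y$ below $1$, contradicting its blow-up. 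Your Stage 3 instead tries to read off a quantitative decay rate from the linearization at the sink, and as written this step has a genuine gap.

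Concretely, your argument needs $\int^{\infty}e^{\beta_0 s}\mathcal Z(s)^{1/(p-1)}\,ds<\infty$, i.e.\ an exponential upper bound $\mathcal Z(t)\le Ce^{-\sigma t}$ with $\sigma>(p-1)\beta_0$ \emph{along the actual trajectory}; ``decays strictly faster than $e^{-\beta_0 t}$'' is not even sufficient (a source like $e^{-\beta_0 t}/t$ decays strictly faster yet makes the integral diverge). You infer the rate $\gamma_0$ for $\mathcal Z^{1/(p-1)}$ from the eigenvalue $-\gamma_0(p-1)$ of $M_0$, but (i) the vector field $g$ is not $C^1$ at the origin for the parameter ranges the paper allows ($m<1$, $\beta<1$, $p\neq 2$, $q<p-1-\alpha$), so the linearization does not control local decay rates; and (ii) even in the smooth case, $\mathcal Z$ is fed by $\mathcal Y^{\beta}\mathcal X^{q/(p-1-\alpha)}$ and could a priori be dragged to the slower rates of $\mathcal X$ or $\mathcal Y$, so one must verify that the bootstrap closes (this uses the identities $\alpha_0(p-1-\alpha)+1=m\beta_0$ and $\beta\beta_0+q\alpha_0=\gamma_0(p-1)+1$, which give exactly the needed strict margins). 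The idea can be rescued -- e.g.\ by constructing an explicit exponentially decaying supersolution $(\kappa_1e^{-\sigma_1 t},\kappa_2e^{-\sigma_2 t},\kappa_3e^{-\sigma_3 t})$ with $\sigma_1<\alpha_0(p-1-\alpha)$, $\sigma_2<\beta_0$, $\sigma_3<\gamma_0(p-1)$ chosen compatibly and invoking the comparison principle for the cooperative system once the trajectory enters a small neighbourhood of the origin -- but none of this is in your proof, so the contradiction with $v(1^-)=\infty$ is not yet established. I would also point out that once this is done correctly your observation that $e^{\beta_0 t}\mathcal Y(t)=v(r)/\mu$ is monotone and must tend to $+\infty$ makes the contradiction very clean, and arguably more self-contained than the paper's perturbation argument.
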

\noindent For the proof of Proposition \ref{prop2}, we need to recall two notions of dynamical systems:
\begin{define}
A circuit is a finite sequence of equilibria $z_1, z_2, . . . , z_K = z_1$, ($K \geq  2$) such that $W^u(z_i)\cap W^s(z_{i+1})\neq \emptyset$ where $W^u, W^s$ denote the stable and unstable manifolds of each equilibria.
\end{define}

\begin{define}
Let $ X \subset \mathbb{R}^3 $ be a nonempty positively invariant subset for an autonomous semiflow $ \phi $
and $ x, y \in X $.
\begin{enumerate}
\item[(i)]  For $\epsilon > 0$ and $t > 0$, an $(\epsilon, t)$-chain from $ x \in X $ to $ y \in X $ is a sequence of points in $ X $,
$ x = x_1, x_2, . . . , x_n, x_{n+1} = y$ and of times $t_1, t_2, . . . , t_n \geq  t$ such that $| \phi(t_i
, x_i) - x_{i+1}| < \epsilon$.
\item[(ii)] A point $ x \in X $ is called chain recurrent if for every $\epsilon > 0$, $t > 0$ there is
an $(\epsilon, t)$-chain from $x$ to $x$ in $X$.
\item[(iii)] The set $X$ is said to be chain recurrent if every point $ x \in X $ is chain
recurrent in $ X $.
\end{enumerate}
\end{define}
\begin{proof}
Setting  $\zeta_{0}=\left(\mathcal X(t_0),\mathcal Y(t_0), \mathcal Z(t_0) \right)\in \IT \R^3_+$, we define  the $ \omega $-limit set $\omega_0=\omega (t_0,\zeta_{0})\subset \R^3_+$ of $\phi$ the semiflow of the asymptotically autonomous system \eqref{eq14b}.\\
From Proposition \ref{prop1}, $\omega_0$ is bounded. Thus Theorem $1.8$ in \cite{book32} implies $\omega_0$ is nonempty, compact and connected. Moreover $\omega_0$ is invariant for the semiflow denoted by $ \phi_A $ associated to the asymptotic autonomous system \eqref{2eq14b} , {\it i.e.} for any $t\geq 0$
$$ \phi_A(t,\omega_0)= \omega_0$$
and $\omega_0$ is chain recurrent for $\phi_A$.\\
Finally, $\omega_0$ satisfies $dist( \phi(t,t_0,\zeta_{0}),\omega_0) \to 0$ as $ t \to +\infty $.\\
{\bf Step 1:} $\omega_0 \subset \lbrace (0,0,0);(1,1,1)\rbrace$.\\
For that we argue by contradiction: assume that there exists $P_0 \in \omega_0\backslash  \{(0,0,0),(1,1,1)\}$ .\\
Then, $\phi_A(t,P_0)\in \omega_0$ for any $t\geq 0$ thus the trajectory $t\to\phi_A(t,P_0)$ is bounded as $t\to +\infty$. From Proposition 1.2 in \cite{book32}, we have $\omega_{\phi_A}(P_0)$ is invariant. Hence, we deduce for any $t<0$:
\begin{equation*}
\begin{split}
\omega_{\phi_A}(P_0)&=\phi_A(0,\omega_{\phi_A}(P_0))=\phi_A(t,\phi_A(-t,\omega_{\phi_A}(P_0))\\
&=\phi_A(t,\omega_{\phi_A}(P_0)).
\end{split}
\end{equation*}
Thus we deduce that $t\to\phi_A(t,P_0)$ is bounded as $t\to -\infty$ and the limit sets of $P_0$ associated to $\phi_A$ is bounded.\\
Then, applying  Theorem 10  in \cite{book31} together with $ \mathrm{div}\,g <0 $ and Proposition  \ref{nature of equilibrium}, we infer that
\begin{equation}\label{*}
\lim_{|t|\to+\infty} \phi_A(t,P_0)\in  \{(0,0,0),(1,1,1)\}.
\end{equation}
Now, since $ (0,0,0) $ is asymptotically stable, this implies $\phi_A(t,P_0) \to (1,1,1)$ as $ t \to -\infty$ and we have two following cases:\\
\textbf{Case 1:} heteroclinic orbit {\it i.e} $\phi_A(t,P_0) \to (0,0,0)$ as $t \to + \infty$ \\
or\\
\textbf{Case 2:} homoclinic orbit {\it i.e} $ \phi_A(t,P_0) \to (1,1,1) $ as $ t \to +\infty $.\\
Since $\omega_0$ is invariant by the flow $\phi_A$, both cases imply that $ \omega (\zeta_{0}) $ contains either a connecting orbit between the two different equilibria ($P_0\in W_u(1,1,1$)) or a cycle with respect to $\phi_A$.
\\
In the first case,  the heteroclinic orbit included in $\omega_0$ is not chain recurrent since $(0,0,0)$ is asymptotically stable which contradicts $\omega_0$ is chain recurrent. 
\\
For the second case, $(1,1,1)$ is hyperbolic with transverse unstable and stable manifolds, there can not be any circuit (see for instance \cite{book30} page 1228 or \cite{book31} pages 1677-1678) and we have a contradiction.\\
\ \\
{\bf Step 2:} $ \omega_0=(1,1,1) $\\
Since $ \omega_0 $ is connected then either $ \omega_0= \lbrace (0,0,0) \rbrace  $ or $ \omega_0 = \lbrace (1,1,1) \rbrace  $.\\
The first possibility does not hold. Indeed arguing by contradiction, then there exists $T_0>0$ such that  $\mathcal X(T_0)<1$, $\mathcal Y(T_0)<1$ and $\mathcal Z(T_0) < 1 $.\\
{
Consider $(\tilde u, \tilde v)$ a couple of blow-up solutions of \eqref{otoo36} with the initial data $\tilde v(0)=v(0)+\varepsilon$ on $(0, R_\epsilon)$ where $R_\epsilon<1$ by Proposition \ref{lemn}. Moreover from Proposition \ref{lem2}, we have $ R_\varepsilon$ goes to $1$ as $\varepsilon \to 0$.\\
Let $R\in (0,1)$, we define $\mathcal T_1,\ \mathcal T_2$ and $\mathcal T_3$ from $C([0,R])$ to $C([0,-\ln(1-R)])$ three linear and continuous operators as follows:  for any $w\in C([0,R])$, for any $t\in [0,T]=[0,-\ln(1-R)]$, 
$$\mathcal T_1(w)(t)=\frac{e^{-\alpha_0t}}{\lambda}w(1-e^{-t}),\ \mathcal T_2(w)(t)=\frac{e^{-\beta_0t}}{\mu}w(1-e^{-t})\ \mbox{and} \ \mathcal T_3(w)(t)=\frac{e^{-\gamma_0t}}{\nu}w(1-e^{-t}).$$
By Remark \ref{stabi}, we get as $\varepsilon \to 0$:
$$\sup_{[0,T]}|\mathcal X-\tilde X|=\sup_{[0,T]}|\mathcal T_1(u')-\mathcal T_1(\tilde u')|\leq C\sup_{[0,R]}|u-\tilde u'|\to 0,$$ 
$$\sup_{[0,T]}|\mathcal Y-\tilde Y|=\sup_{[0,T]}|\mathcal T_2(v)-\mathcal T_2(\tilde v)|\leq C\sup_{[0,R]}| v-\tilde v|\to 0$$
and
$$\sup_{[0,T]}|\mathcal Z-\tilde Z|=\sup_{[0,T]}|\mathcal T_3(v')-\mathcal T_3(\tilde v')|\leq C\sup_{[0,R]}|v'-\tilde v'|\to 0.$$
We deduce, for $\varepsilon$ small enough that then there exists  $R\in (1-e^{-T_0}, R_\varepsilon)$,
$$\sup_{[0,T]}|\mathcal X-\tilde X|< 1-\mathcal X(T_0),\ \sup_{[0,T]}|\mathcal Y-\tilde Y|< 1-\mathcal Y(T_0)\ \mbox{and}\ \sup_{[0,T]}|\mathcal Z-\tilde Z|< 1-\mathcal Z(T_0). $$
Hence $\tilde X(T_0)<1$, $\tilde Y(T_0)<1$ and $\tilde Z(T_0)<1$.\\
From Proposition \ref{inf1}, we deduce that, for any $t\in (T_0,-\ln(1-R_\varepsilon))$,  $\tilde X(t)<1$, $\tilde Y(t)<1$ and $\tilde Z(t)<1$.\\
Finally, by construction of $\tilde Y$, we have $\tilde Y(t)\to +\infty$ as $t\to -\ln(1-R_\varepsilon)$ and we obtain a contradiction.
}\end{proof}
\begin{figure}[h!]
\hspace{-1cm}
		\includegraphics[width=0.6\textwidth]{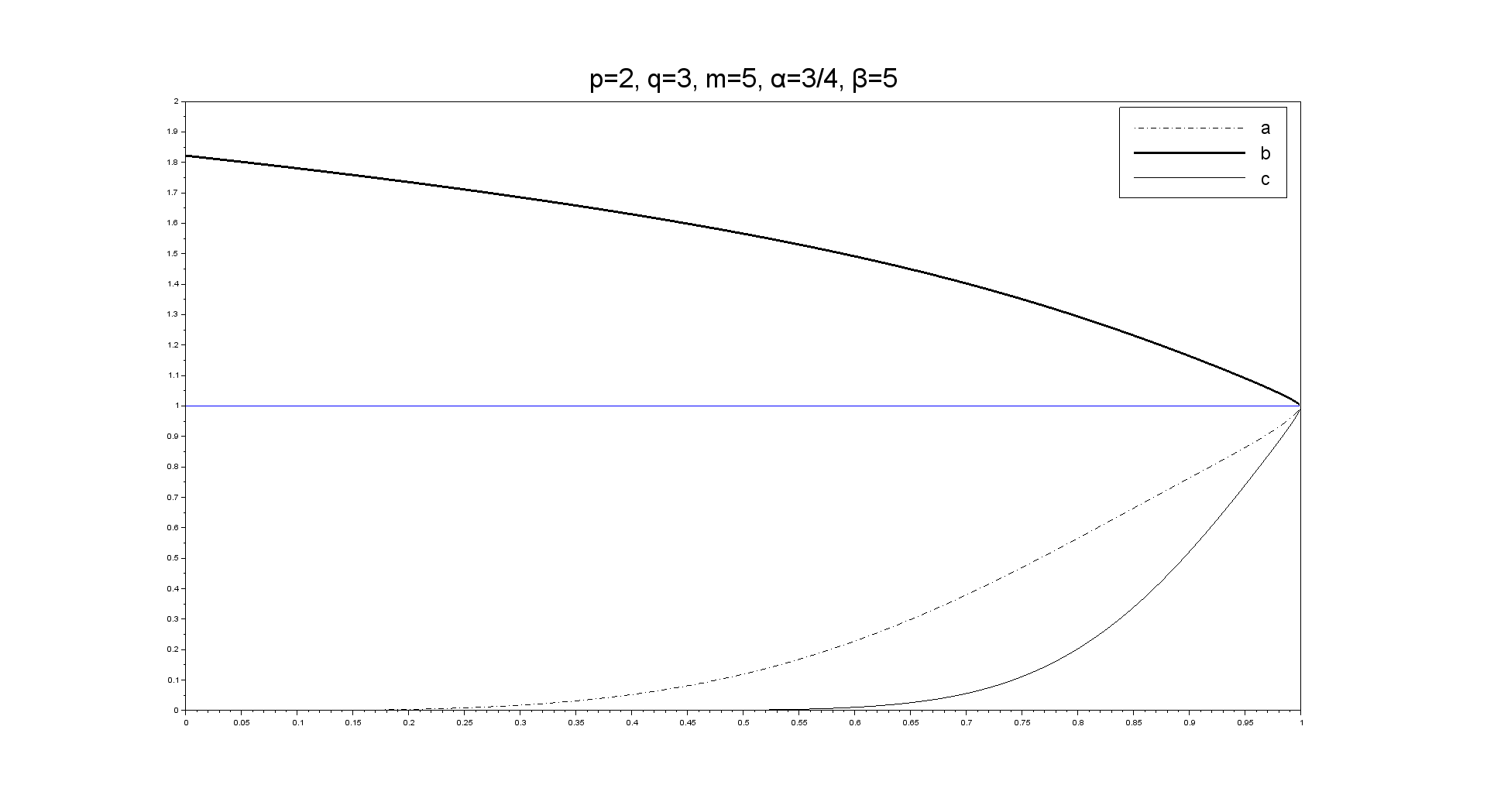}\hspace{-1cm}
				\includegraphics[width=0.6\textwidth]{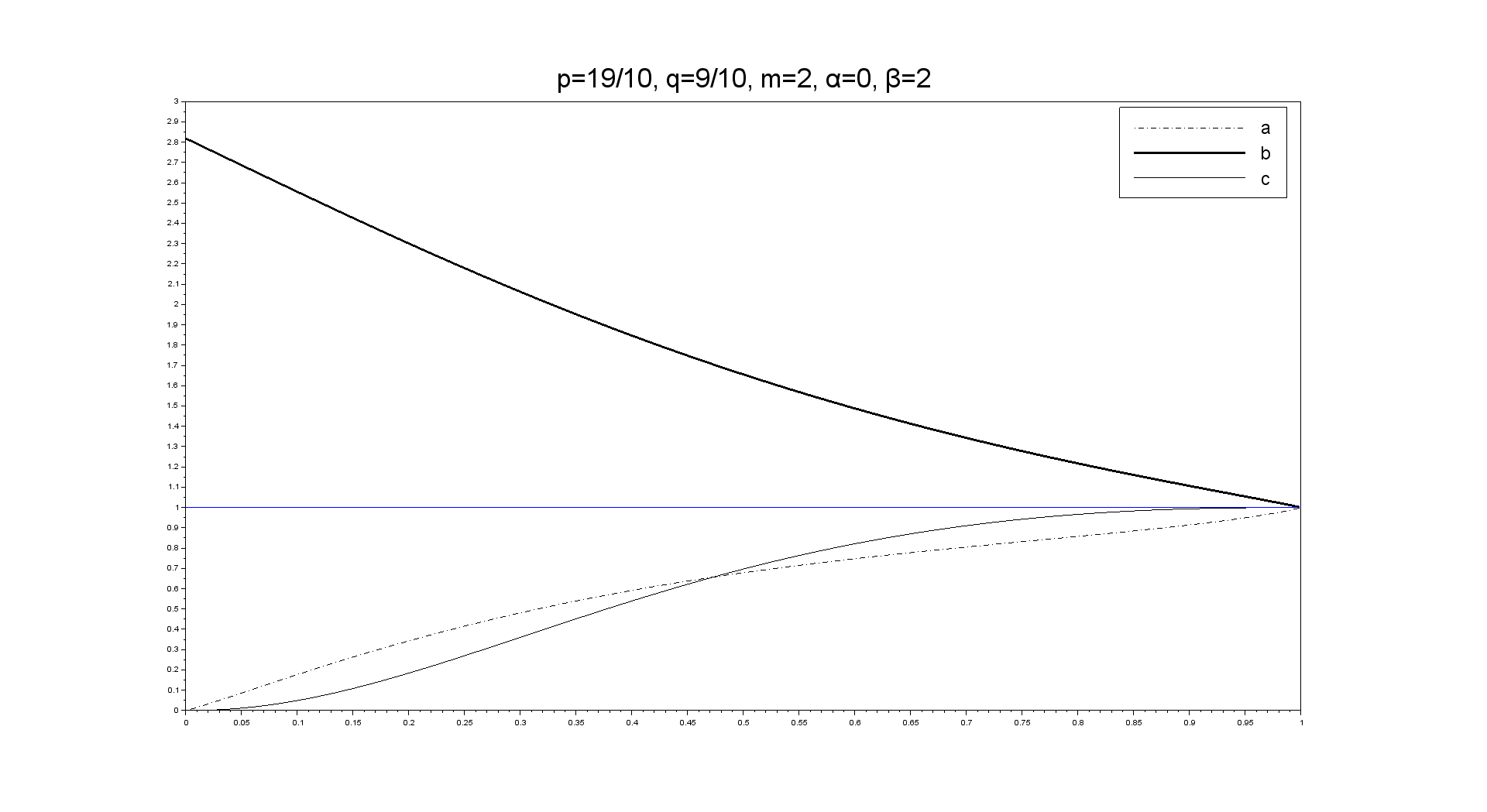}
				\caption{$\alpha_0<1$}
				\label{fig1}
\end{figure}
\begin{figure}[h!]
\hspace{-1cm}
				\includegraphics[width=0.6\textwidth]{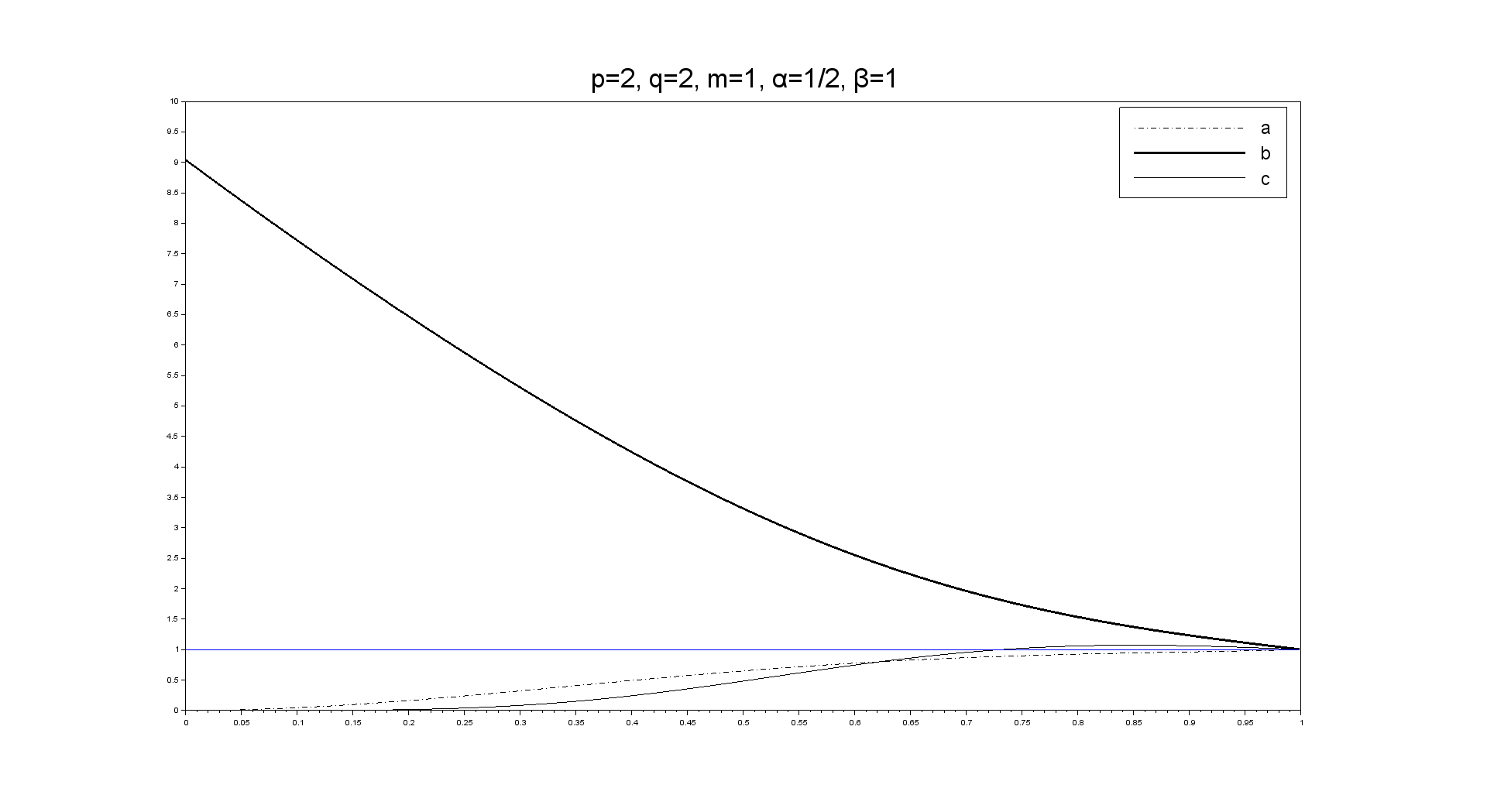}\hspace{-1cm}
				\includegraphics[width=0.6\textwidth]{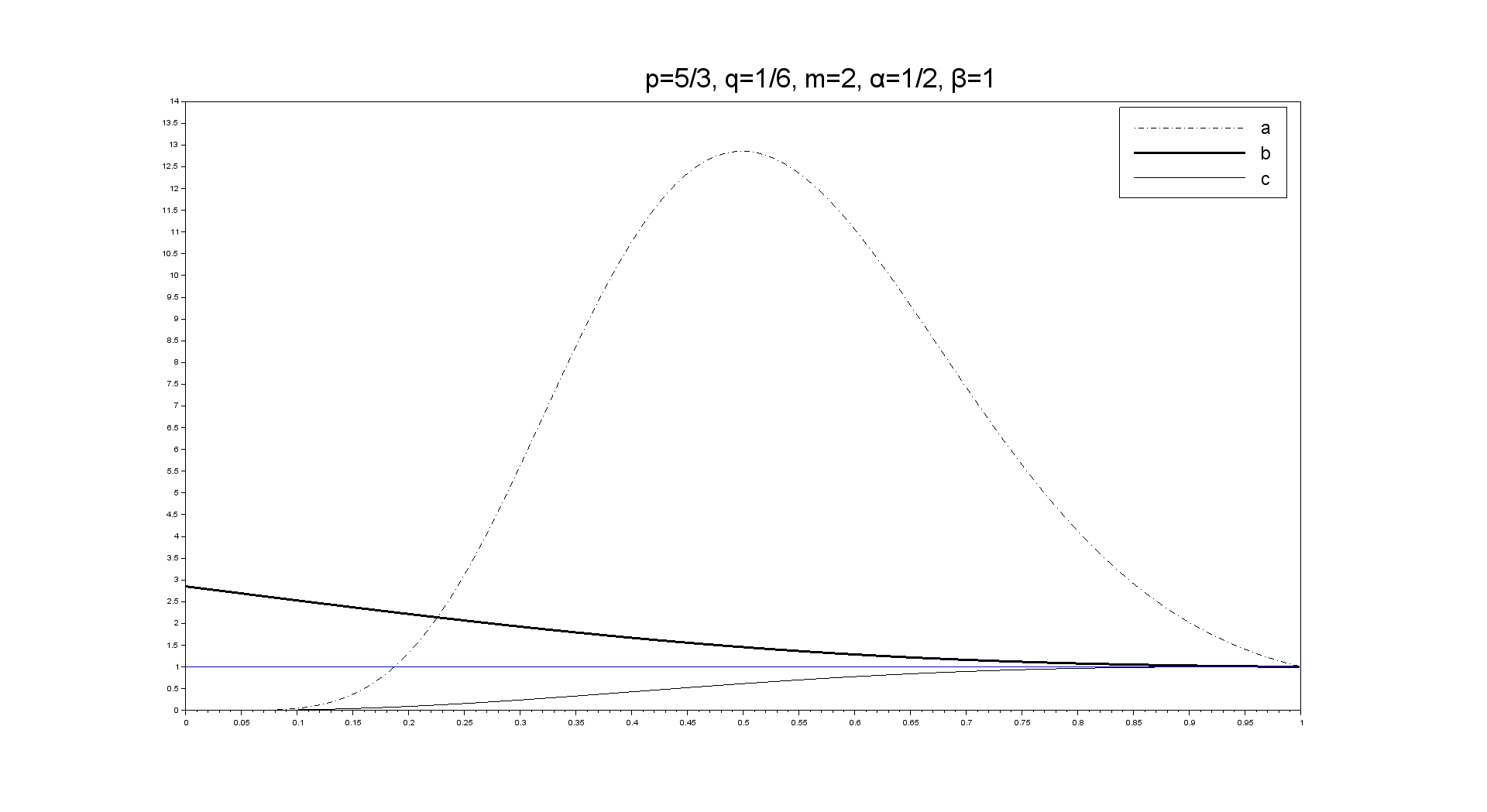}
		  	\caption{$\alpha_0\geq 1$}
				\label{fig2}
\end{figure}
\begin{figure}[h!]
\hspace{-1cm}
		\includegraphics[width=0.6\textwidth]{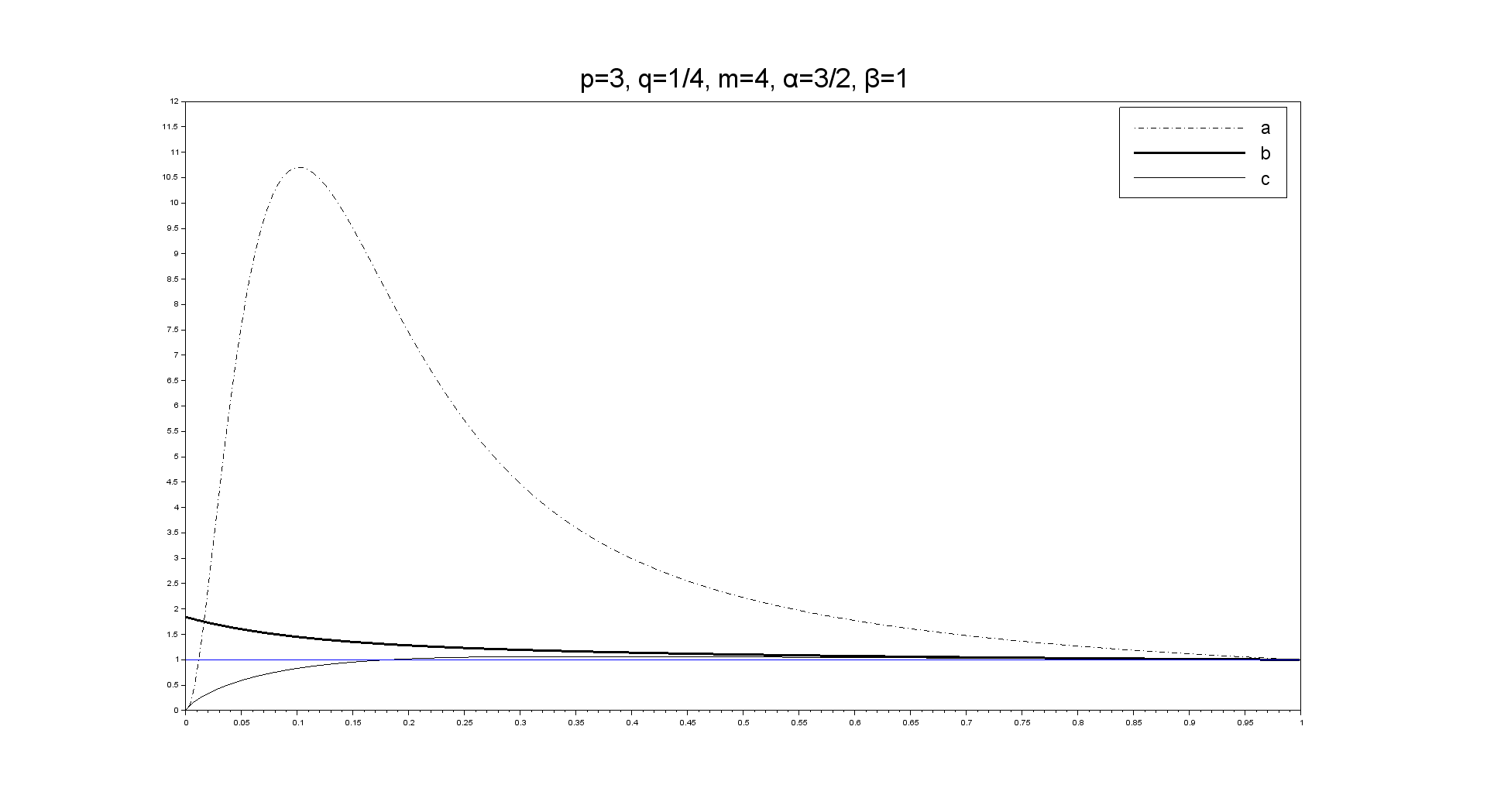}\hspace{-1cm}
		\includegraphics[width=0.6\textwidth]{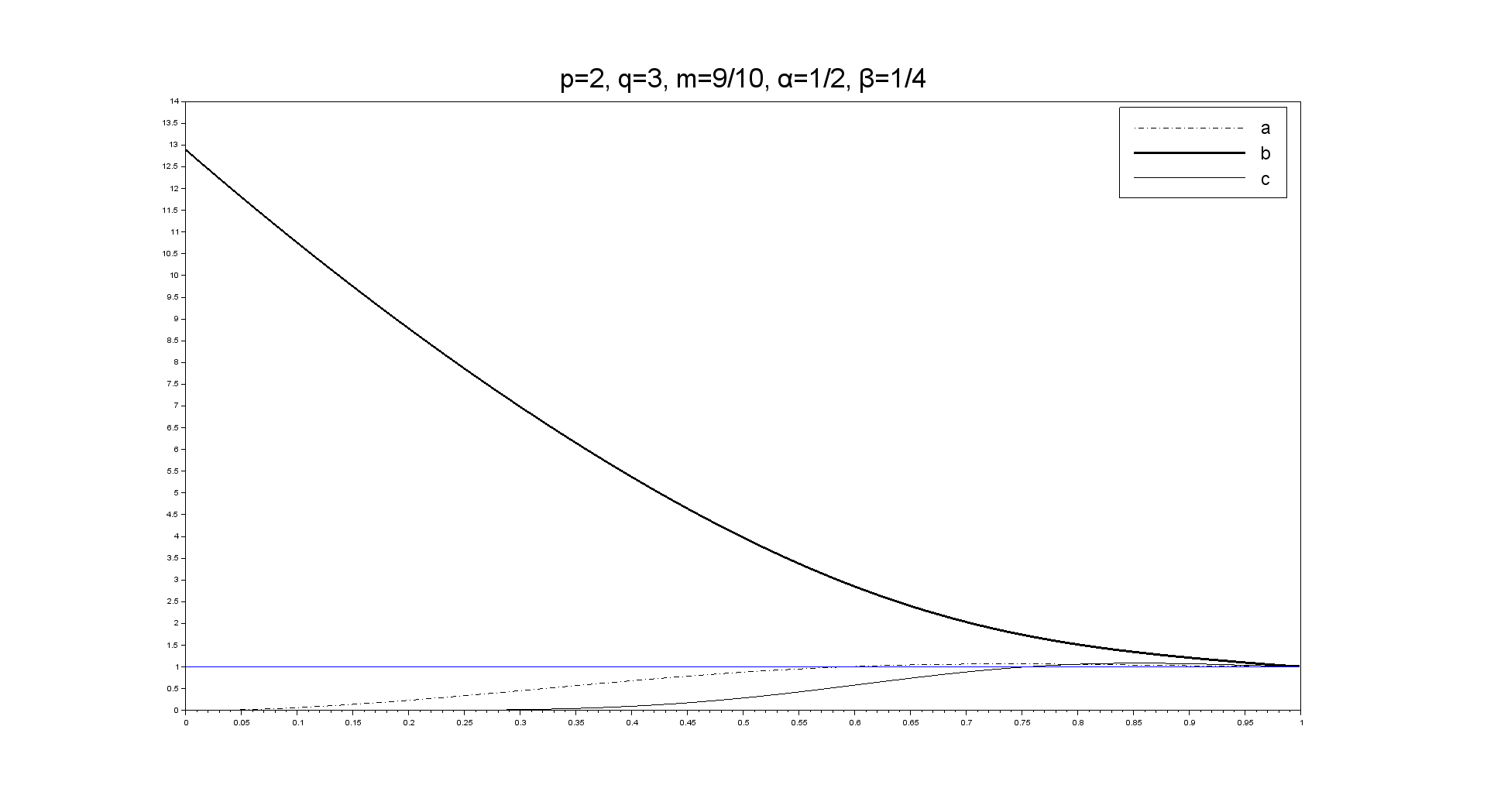}
							\caption{Other examples}
				\label{fig3}
\end{figure}
\begin{remark}
Figures \ref{fig1}-\ref{fig3} give examples of convergence of the triplet functions $(a,b,c)$  to $(1,1,1)$ as $r\to 1$ where $(a,b,c)$ satisfies \eqref{eq11}. More precisely, we have $\alpha_0<1$ in Figure \ref{fig1} and  $\alpha_0\geq1$ in Figure \ref{fig2}. We give particular examples for the conditions of Theorem \ref{theorem1} in Figure \ref{fig3}: in the left-hand graph, we choose $p> 2$ and $q<p-1-\alpha$ and in the right-hand graph, $\beta<1$ and $ m<1$. For every graph, $\delta<0$, $b_0>0$, $\alpha_0>0$ and $N=3$.
\end{remark}

\noindent\textbf{Proof of Theorem $\ref{theorem1}$:}\\
We get from Proposition \ref{prop2} that, as $r \to 1^-$
$$ a(r)=\dfrac{U(r)}{\lambda}(1-r)^{\alpha_0} \to 1,\quad b(r)=\dfrac{v(r)}{\mu}(1-r)^{\beta_0} \to 1\  \mbox{ and }\ c(r)=\dfrac{V(r)}{\nu}(1-r)^{\gamma_0} \to 1$$
hence 
$$u'(r) \sim \dfrac{\lambda}{(1-r)^{\alpha_0}}\ \mbox{ and }\ 
 v(r) \sim \dfrac{\mu}{(1-r)^{\beta_0}}  \hspace{0.1cm} \text{as} \hspace{0.1cm} r \to 1^- .$$
Since $ \beta_0>0 $, we have obviously $v(1)=\infty$. Finally we deduce 
$$ u(r) \sim \dfrac{\lambda}{(\alpha_0-1)(1-r)^{\alpha_0-1}}\quad \mbox{ as } r \to 1^-  $$
if $\alpha_0\neq 1$ and
$$ u(r) \sim \lambda \ln (1-r)\quad \mbox{ as } r \to 1^-  $$
if $\alpha_0= 1$.

\renewcommand{\bibname}{REFERENCES}

\end{document}